\newcommand{\eqnref}[1]{(\ref{#1})}
\newtheorem{example}{Example}[section]
\newcommand{\eps}{\epsilon}
\title{Instability of the Sherman-Morrison formula and stabilization by iterative refinement}
\author{Behnam Hashemi\thanks{School of Computing and Mathematical Sciences, University of Leicester, Leicester, LE1 7RH, UK. {\tt b.hashemi@le.ac.uk}.}
\and 
Yuji Nakatsukasa\thanks{Mathematical Institute, University of Oxford, Oxford, OX2 6GG, UK. Supported by EPSRC grants EP/Y010086/1 and EP/Y030990/1. {\tt nakatsukasa@maths.ox.ac.uk}.}
}
\begin{document}
\maketitle
\date{\today}

\begin{abstract} 
Owing to its simplicity and efficiency, the Sherman-Morrison (SM) formula has seen widespread use across various scientific and engineering applications for solving rank-one perturbed linear systems of the form $(A+uv^T)x = b$. Although the formula dates back at least to 1944, its numerical stability properties have remained an open question and continue to be a topic of current research. We analyze the backward stability of the SM, demonstrate its instability in a scenario increasingly common in scientific computing and address an open question posed by Nick Higham on the proportionality of the backward error bound to the condition number of $A$.\ We then incorporate fixed-precision iterative refinement into the SM framework reusing the previously computed decompositions and prove that, under reasonable assumptions, it achieves backward stability without sacrificing the efficiency of the SM formula. 
While our theory does not prove 
the SM formula with iterative refinement always outputs a backward stable solution, empirically it is observed to eventually produce a backward stable solution in all our numerical experiments. 
We conjecture that with iterative refinement, the SM formula yields a backward stable solution provided that $\kappa_2(A), \kappa_2(A+uv^T)$ are both bounded safely away from $\eps_M^{-1}$, where $\eps_M$ is the unit roundoff.

\end{abstract}
\begin{keywords} 
Sherman-Morrison formula; backward stability; iterative refinement. \end {keywords}
\begin{AMS} 65Gxx, 65F55 \end{AMS}

\section{Introduction}
The Sherman--Morrison--Woodbury (SMW) formula
\[
    (A+UV^T)^{-1}= A^{-1} - A^{-1}U(I + V^T\!A^{-1}U)^{-1}V^T\! A^{-1}
\]
has been used widely in scientific computing. Here $U,V$ are $n\times r$ with $r\leq n$, usually $r\ll n$. 
When $r=1$, it is often called the SM formula. Its primary usage is for solving linear systems of equations that undergo a low-rank update, as solving 
$(A+UV^T)x=b$ can be done efficiently if linear systems 
with respect to $A$ can be solved efficiently, e.g. when a factorization of $A$ is known or $A$ is sparse or has other structures that can be exploited for efficient solution. 
We refer the readers to \cite{Hager89} for an overview of the key ideas related to the SMW formula, historical developments \cite{Duncan44, Woodbury50, Bartlett51, SM50} and its many applications from ordinary and partial differential equations~\cite{Fortunato21, Olver13} to networks and optimization. It can also be extented to least-squares problems~\cite{guttel2024sherman}. 

Despite its widespread use, the numerical stability of SMW has remained an open problem. Yip's 1986 paper \cite{Yip86} remains the only reference to our knowledge to focus on its stability. Some more comments are in Hager \cite{Hager89} and Hao and Simoncini \cite{Hao21} etc. In these papers, the focus is on the conditioning of the matrix inversion $(I + V^T\!A^{-1}U)^{-1}$. Indeed, a poor choice of $U$ 
(note that $UV^T=UM^{-1}MV^T$ for any nonsingular $M$) results in an ill-conditioned $(I + V^T\!A^{-1}U)$, and it is easy to see that the resulting use of SMW gives poor results. 

Yip shows that $\kappa_2(I + V^T\!A^{-1}U)\leq \kappa_2(A)\kappa_2(A+UV^T)$
by choosing $U,V$ such that $U$ (or $V$) has orthonormal columns; this can be done simply by taking $M=R$ above, where $U=QR$ is the thin QR factorization. 
It follows that, if $A,A+UV^T$ are both well-conditioned, then it is easy to ensure the inversion of $I + V^T\!A^{-1}U$ is well-conditioned. Moreover, assuming $A^{-1}$ (or linear systems with $A$) are computed in a backward stable fashion in each step of the SMW formula application, we conclude that the forward error is small, and hence so is the backward error. That is, SMW can be implemented in a stable manner if both $A$ and $A+UV^T$ are well conditioned. 
In~\cite[Problem~26.2]{Higham02} Higham notes that examples can be found where SM is not backward stable, and that the backward error in practice appears to be bounded by $\mathcal{O}(\eps_M\kappa_\infty(A)$. 

Ma, Boutsikas, Ghadiri and Drineas \cite{Ma25} recently studied the stability of the SMW formula deriving both backward and forward error bounds. 
Their analysis focuses on the errors in two steps: computing the inverse of $A$, and the inverse of the capacitance matrix $Z = I + V^T A^{-1} U$ whose size is smaller than that of $A$. Essentially, they assume that all other operations are performed in exact arithmetic and formulate sufficient conditions imposing bounds on the amount of forward error in computing the aforementioned matrix inverses, and on $\|A\|$ and $\|U\| \|V\|$ and $\|Z\|$ so as to bound the forward error and backward error. Their work studies the SMW formula's stability in terms of matrix inversion, not when used for linear systems; this is a different problem, and the stability (or otherwise) in one problem does not immediately imply (in)stability of the other\footnote{See the appendix for a more detailed discussion.}. We also note that a classical work by Govaerts~\cite{Govaerts91} examines the backward error of a $(n+1)\times (n+1)$ bordered linear system 
$\begin{bmatrix}
A & u \\
v^T & -1
\end{bmatrix}
\begin{bmatrix}
x \\
\zeta
\end{bmatrix} =
\begin{bmatrix}
b \\
0
\end{bmatrix}$, which is related to the SM formula, and derives the backward error for this system. It is not entirely clear how the result can be translated into a backward error with respect to $A+uv^T$. We describe Govaerts' work in more detail in the appendix.

In this work we examine the backward stability of solving linear systems using the SM(W) formula.\ We first identify a source of instability. Perhaps contrary to intuition and most prior studies, 
it has nothing to do with the conditioning of $I + V^T\!A^{-1}U$; it is unstable even when $r=1$. We then focus on the $r=1$ case and carefully examine the backward error of solving 
\begin{equation}  \label{eq:maingoal}
(A+uv^T)x = b  
\end{equation}
 via the SM(W) formula, i.e., $x = A^{-1}b - A^{-1}u(1 + v^T\!A^{-1}u)^{-1}v^T\! A^{-1}b$. For definiteness we present the implementation in Algorithm~\ref{SM:alg}. 
\begin{algorithm}[!h]
\caption{SM: Sherman-Morrison formula for solving $(A+uv^T)x = b$. It is assumed that linear systems with $A$ can be solved efficiently.}
\label{SM:alg}
\begin{algorithmic}[1]
\STATE Solve $Ay = b$ for $y$ to obtain $\hat y$. 
\STATE Solve $Az=u$ for $z$ to obtain $\hat z$.  
\STATE Compute $\alpha = v^T \hat y$ and $\beta = 1+v^T \hat z$, and let $\theta = \alpha / \beta$. 
\STATE Output $\hat x = \hat y - \theta \hat z$.
\end{algorithmic}
\end{algorithm}

We show that under some conditions, a question raised by Nick Higham~\cite[p.~570]{Higham02} can be answered in the affirmative, namely that the backward error of the SM formula can be bounded by $\eps_M \kappa(A)$.

All these point towards a cautionary note: the SM(W) formula, while very attractive for computational efficiency, can be dangerously unstable. 
Fortunately, we suggest a simple remedy to the instability: iterative refinement (IR). 
IR is a classical idea in NLA but has witnessed a resurged interest, partly due to the growing interest and availability of mixed precision arithmetic~\cite{carson2018accelerating}, but perhaps equally importantly, as a tool to correct or improve a fast but unstable algorithm~\cite{epperly2024fast}.

Despite the widespread use of SM(W) and IR, the combination appears to not have been used in the literature. The key contribution of this paper is the introduction and analysis of iterative refinement (IR) for the SM formula.\ With IR we find a correction to a (possibly inaccurate) solution $\hat x$ obtained by SM as follows: compute the residual $r=b-(A+uv^T)\hat x$, and solve $(A+uv^T)\delta x = r$ for the correction $\delta x$ using SM again, and output $x+\delta x$ as the solution. 
The resulting algorithm SM-IR is displayed in Algorithm~\ref{SMIR:alg}. 
It is worth noting that SM-IR has the same complexity as SM; it only doubles the cost. 
If necessary, one can repeat IR more than once, which can further improve the accuracy. 

\begin{algorithm}[!h]
\caption{SM-IR: Sherman-Morrison formula with iterative refinement for solving $(A+uv^T)x = b$.}
\label{SMIR:alg}
\begin{algorithmic}[1]
\STATE Compute $\hat x$ using SM.
\STATE Compute $r  = b - A \hat x - (v^T \hat x) u$.
\STATE Solve $A y_r = r$ for $y_r$ to obtain $\hat y_r$
\STATE Compute $\theta_r := \alpha_r / \beta$ where $\alpha_r = v^T y_r$ and $\beta$ same as in Step 1.
\STATE Output $\hat w = \hat x + y_r - \theta_r z$.
\STATE If the accuracy is insufficient with $\hat w$, set $\hat x:=\hat w$ and return to Step 2.
\end{algorithmic}
\end{algorithm}

It is worth noting that IR is often used together with mixed precision. For example, the classical analysis and usage of IR assumed higher-precision arithmetic is used for the computation of the residual~\cite[Ch.~12]{Higham02}. More recently, Carson and Higham~\cite{carson2018accelerating} study IR using multiple precisions. 
In this work, we focus on the simplest IR where fixed precision (e.g. double precision) is used throughout; mixed precision, while possibly helpful, is not observed to be necessary empirically, and our analysis assumes the same precision is used throughout.\ More generally, IR is often able to improve the accuracy of an unstable solution, even in fixed precision~\cite{jankowski1977iterative,skeel1980iterative}. 

We analyze the stability of SM-IR with one IR step, and show that under some mild conditions that can be verified on the fly, the algorithm outputs a backward stable solution $\hat w$, that is, one that satisfies 
$(A+uv^T+\Delta A)\hat w = b$ for some $\|\Delta A\|=\mathcal{O}(\eps_M\|A+uv^T\|)$. 
That is, while the SM formula is fundamentally unstable, the instability can be fixed with IR under mild assumptions. 

However, the assumptions are not always satisfied, and we are currently unable to definitively prove that SM-IR will eventually (after a modest number of IR steps) produce a backward stable solution for any linear system. Despite this, our numerical experiments suggest strongly that this is the case. 
We hence conjecture that SM-IR is backward stable under very mild conditions, namely that both $A$ and $A+uv^T$ have condition number bounded by $\eps_M^{-1}$. 
Another open problem is to deal with a higher-rank case $r>1$, that is, the SMW formula. One can clearly perform iterative refinement here, resulting in SMW-IR. An important open problem is to address the stability of SMW-IR. 

\paragraph{Notation}
Following Higham~\cite{Higham02}, quantities computed in floating-point arithmetic are denoted with a hat.
To avoid confusing notation (to reserve $u$ for the vector in the updated matrix $A+uv^T$), we denote unit roundoff by $\eps_M$. Usually in stability analysis we suppress terms that grow like a modest polynomial in the matrix dimensions, so the choice of matrix norm is unimportant; for this reason we use $\|\cdot\|$ in most cases. 
Capital letters denote matrices, and vectors are represented by $w,x,y$ and $z$. Other lower-case letters denote scalars. When the norm choice matters or improves clarity, we use $\|\cdot\|_2$ for the spectral norm of a matrix. $\kappa(A)=\|A\|\|A^{-1}\|$ denotes the condition number of $A$.

\section{A source of instability of the SMW formula}\label{sec:source}
Here we identify a situation where the SMW formula would naturally lead to a large error, and is expected to be unstable. 

To simplify the discussion here we assume $\|A+UV^T\|_2=1$ and $\|b\|_2=1$ without loss of generality, and also that $\|A\|_2=\mathcal{O}(1)$, which is a nontrivial assumption. Here are the remaining assumptions: 
\begin{enumerate}
\item $A$ is \emph{Ill-conditioned}: $\kappa_2(A)\gg 1$. We make no assumptions on $\kappa_2(A+UV^T)$. 
\item \emph{Small-norm solution}: The linear system $(A+UV^T)x=b$ has a solution $\|x\|_2=\mathcal{O}(1)$. By contrast, $Ay=b$ has solution $\|y\| = \mathcal{O}(\kappa_2(A))\gg 1$. 
\end{enumerate}
The second assumption merits further explanation. 
It is essentially an assumption on the right-hand side $b$: if we write $b=\sum_{i=1}^nc_iv_i$ where $A+UV^T=\sum_{i=1}^n\sigma_iu_iv_i^T$ is the singular value decomposition, then the assumption is requiring that $|c_i| = \mathcal{O}(\sigma_i)$. We refer to this as a small-norm assumption, and $x$ is a small-norm solution. We note that such situations are increasingly common in scientific computing: (i) in inverse problems, this is a standard phenomenon, and such assumptions are commonly made in the analysis, and called the \emph{Picard condition}~\cite[\S 1.2.3]{hansen2010discrete}. 
(ii) In numerical computation with \emph{frames} \cite{adcock2019frames}, it is an assumption made (and satisfied) in order to establish convergence and good numerical behavior of a numerically computed solution for approximation of functions in a domain. 
(iii) Indirectly related: the prominence of stochastic gradient descent comes (at least partially) from its ability to find a small-norm approximate solution to an (often highly underdetermined) optimization problem~\cite{hastie2022surprises}. 

More generally, one could argue that huge-norm solutions to ill-conditioned problems are unlikely to have practical/physical relevance; for example given a solution $x$ with $\|x\|=10^{16}$, the task of computing $x^Ty$ for some $y$ is difficult to do with accuracy better than $\mathcal{O}(1)$ in double-precision arithmetic. 

For these reasons we regard the assumption (ii), while certainly a nontrivial one, as quite natural and an important case in practice. In experiments, one can form such $b$ by simply taking a randomly generated (e.g. Gaussian) vector $x$ and setting $b=(A+UV^T)x$. By contrast, if $b$ is taken randomly, one gets the opposite situation\footnote{These statements hold with (exponentially) high probability.} where $\|x\|_2=\mathcal{O}(\kappa_2(A+UV^T))$. Gill, Wright, Murray~\cite[p.~102]{gill2021numerical} describe how such small-norm solutions should arise naturally in practical problems. 

Let us explain where the instability comes from. 
Recall that 
$x = (A+UV^T)^{-1}b = A^{-1}b - A^{-1}U(I + V^T\!A^{-1}U)^{-1}V^T\! A^{-1}b=y-\theta z$. In any implementation of the SMW formula (including Algorithm~\ref{SM:alg}), one would first compute $y$, then find $\theta, z$ (using $y$). 
Then assuming a backward stable solver is used, with the computed $\hat y$ we have $(A+\Delta A)\hat y=b$, where $\|\Delta A\|/\|A\|=\epsilon$. By Neumann series this implies 
$\hat y - y = A^{-1}\Delta A A^{-1}b + \mathcal{O}(\|\Delta A\|^2)$. 
We next compute $\theta$ and $z$. Even if this is done exactly or with high relative accuracy (which in general we do not expect), the result will have error at least 
$
\|\hat x-x\|\gtrsim 
\|\hat y - y\| = \|A^{-1}\Delta A A^{-1}b\| =
\|A^{-1}\Delta A y\|= \mathcal{O}(\epsilon \kappa_2(A)\|y\|) = \mathcal{O}(\epsilon (\kappa_2(A))^2)$, by the assumption $\|y\| = \mathcal{O}(\kappa_2(A))$. 

It remains to see that this is indeed not a backward stable solution. 
Note that by definition a backward stable solution $\hat x$ needs to satisfy 
$(A+UV^T+\epsilon)\hat x = b$, so the residual is 
$\|(A+UV^T)\hat x -b\| = \|\epsilon\hat x\|
\leq  \|\epsilon\| \|\tilde x\|= \mathcal{O}(\eps_M)$, by the assumption that $\|x\|=\mathcal{O}(1)$; which implies  also that $\|\hat x\|=\mathcal{O}(1)$, as $\|(A+UV^T+\epsilon)(x-\hat x)\|= \|\epsilon x\| = \mathcal{O}(\eps_M)$, and so $\|x-\hat x\|\leq \mathcal{O}(\eps_M/\sigma_{\min}(A+UV^T+\epsilon))=\mathcal{O}(1)$; Here we are tacitly assuming $\eps_M\kappa_2(A)<1$.

So if $\hat x$ is backward stable we need $\|\hat x\|=\mathcal{O}(1)$; however, $\hat x$ is computed as the difference $\hat y-\hat z$, where $\|\hat y\|_2=\mathcal{O}(\kappa_2(A))$, so simply by representing $\hat y$ we incur error $\mathcal{O}(\eps_M\kappa_2(A))$. 
Thus $x-\hat x$ is at least $\mathcal{O}(\eps_M\kappa_2(A))$, and so the residual is 
\begin{align*}
\|(A+UV^T)\hat x-b\| &= 
\|(A+UV^T)(\hat x-x)\|
\gtrsim \eps_M\|A+UV^T\|\kappa_2(A)
=\mathcal{O}(\eps_M\kappa_2(A)), 
\end{align*}
where the estimate assumes that the error $\hat x-x$ is unstructured with respect to the singular vector expansion of $A+UV^T$. Note that we also derive an upper bound for the residual in~\eqnref{res_bnd_new:eq}, which (under the current assumption that $\kappa_2(A)\gg \kappa_2(A+UV^T)$ in addition to further assumptions set out there) shows that $\mathcal{O}(\eps_M\kappa_2(A))$ is a tight estimate of the residual; which also matches Higham's conjecture that the SM backward error is bounded from above by $\eps_M \kappa_2(A)$. Thus the backward error of $\hat x$  as a solution for $(A+UV^T)x=b$ is at least $\eps_M\kappa_2(A+UV^T)$, and so it is not a backward stable solution. 

\section{Higham's open problem: Bounding backward error in SM}
In the remainder of the paper we restrict ourselves to the $r=1$ case and consider the stability of the SM (not SMW) formula for solving $(A+uv^T)x=b$.

An open problem in Higham's magnum opus on numerical stability~\cite[p. 570]{Higham02} is whether SM has a backward error bound proportional to $\kappa_{\infty}(A) = \|A^{-1}\|_{\infty} \|A\|_{\infty}$. 

Our plan in what follows is to first derive a neat representation for the SM residual, bound it, and then invoke the Rigal-Gaches formula~\cite{Rigal67} for the backward error~\cite[Lem.~1.1]{Higham02} 
\begin{equation}
\label{berr_formula:eq}
\eta(\hat x) = \frac{\|r\|}{\|B\| \|\hat x\| + \|b\|}.
\end{equation}

To bound the residual, we consider the solution obtained by SM as
\begin{equation}
\label{SM_sol_rep_new:eq}
\hat x := fl(y - \frac{\alpha}{\beta} z) = \hat y \ominus \frac{\alpha}{\beta} \hat z
\end{equation}
where $\alpha = v^T \hat y$, and $\beta = 1+v^T \hat z$. Here, a backward stable solver is used to compute $\hat y$ and $\hat z$ as solutions to $Ay = b$ and $Az=u$, respectively. Therefore, 
\begin{equation}
\label{berr_y:eq}
(A+\Delta_1) \hat y = b, \quad \mbox{ where } \| \Delta_1\| \leq c_1 \eps_M \| A\|
\end{equation}
where $c_1$ is a modest multiple of $1$; see \cite[p. 171]{Watkins} or \cite[Thm. 16.2 ]{TB:2022}. Similarly,
\begin{equation}
\label{berr_z:eq}
(A+\Delta_2) \hat z = u, \quad \mbox{ where } \| \Delta_2\| \leq c_2 \eps_M \| A\|.
\end{equation}
We know from the standard floating point arithmetic model that for each $1\leq i\leq n$ we have $\hat y_i \ominus \frac{\alpha}{\beta} \hat z_i = (\hat y_i - \frac{\alpha}{\beta} \hat z_i) (1+\delta_i)$ where $|\delta_i| \leq \eps_M$. So, the above representation gives\footnote{Here again we are  focusing on the error in the subtraction and assume that the errors in $\hat \alpha, \hat \beta$, their division and multiplication with $\hat z_i$ are insignificant.}
\begin{equation}
\label{SM_sol_rep_new2:eq}
\hat x = (\hat y - \frac{\alpha}{\beta}\hat z) + \delta \hat x
\end{equation}
where $\delta \hat x$ is the vector whose entries are $(\hat y_i - \frac{\alpha}{\beta} \hat z_i)  \delta_i$, i.e.,
\begin{equation}
\label{cancelation_err1:eq}
|\delta \hat x| \leq \eps_M |\hat y - \frac{\alpha}{\beta}\hat z|
\end{equation}
With this we now turn to $r$. We have
\begin{align*}
r &= b - B \hat x = b - (A+uv^T) (\hat y \ominus \frac{\alpha}{\beta} \hat z)  =  b - (A+uv^T) \big( (\hat y - \frac{\alpha}{\beta}\hat z) + \delta \hat x \big)\\
&= b - A \hat y + \frac{\alpha}{\beta} A \hat z - u v^T \hat y + \frac{\alpha}{\beta} (v^T \hat z) u - (A+uv^T) \delta \hat x \\
&= b - A \hat y + \frac{\alpha}{\beta} \bigg( A \hat z + (v^T \hat z) u \bigg) - u v^T \hat y - (A+uv^T) \delta \hat x  \quad \ \ \ \ \mbox{and \eqnref{berr_z:eq} yields }  A \hat z = u - \Delta_2 \hat z  \\
&= \Delta_1 \hat y + \frac{\alpha}{\beta} \bigg( u -  \Delta_2 \hat z + (v^T \hat z) u \bigg) - u v^T \hat y - (A+uv^T) \delta \hat x \quad \mbox{where with \eqnref{berr_y:eq} }  b - A \hat y = \Delta_1 \hat y \\
&= \Delta_1 \hat y - \frac{\alpha}{\beta} \big(\Delta_2 \hat z \big) + 
u \frac{\alpha}{\beta} \bigg( \underbrace{1 + (v^T \hat z)}_{\beta} \bigg) - u v^T \hat y - (A+uv^T) \delta \hat x  \\
&= \Delta_1 \hat y - \frac{\alpha}{\beta} \big(\Delta_2 \hat z \big) + 
u \alpha  - u v^T \hat y - (A+uv^T) \delta \hat x \quad \quad \mbox{(as $u \alpha  - u v^T \hat y = 0$)}\\
&= \Delta_1 \hat y - \frac{\alpha}{\beta} \big(\Delta_2 \hat z \big) - (A+uv^T) \delta \hat x
\end{align*}
Using the bounds in \eqnref{berr_y:eq}, \eqnref{berr_z:eq} and \eqnref{cancelation_err1:eq} yields
\begin{align}
\| r \| &\leq \|\Delta_1 \| \|\hat y\| +  |\frac{\alpha}{\beta}| \| \Delta_2\| \|\hat z\| + \| A+uv^T\| \|\delta \hat x\| \nonumber\\
& \leq c \ \eps_M \| A\| \Big( \|\hat y\| +  |\frac{\alpha}{\beta}| \|\hat z\| \Big) + \eps_M \| A+uv^T\|  \|\hat y - \frac{\alpha}{\beta}\hat z\| \nonumber\\
& \leq c \ \eps_M \| A\| \Big( \|\hat y\| +  |\frac{\alpha}{\beta}| \|\hat z\| \Big) + \eps_M \| A+uv^T\|  \big( \|\hat y\| + |\frac{\alpha}{\beta}| \|\hat z\| \big) \nonumber \\
& = \eps_M (c\ \| A\| + \| A + uv^T\|) \Big( \|\hat y\| +  |\frac{\alpha}{\beta}| \|\hat z\| \Big) \label{res_bnd:eq}
\end{align}
where $c = \max \{c_1, c_2 \}$ is again a modest multiple of $1$. 

It turns out that the last term $\|\hat y\| +  |\frac{\alpha}{\beta}| \|\hat z\|$ plays a crucial role in the stability analysis of SM. It represents the (absolute) condition number of the subtraction of $\frac{\alpha}{\beta} \hat z$ from  $\hat y$ when forming $\hat x$. We now seek a bound on this condition number as it tells us where cancellation may occur in SM. So, we now focus on finding upper bounds for $\hat y$ and $|\frac{\alpha}{\beta}| \|\hat z\|$. It is worth noting that these quantities are easy to compute on the fly; in particular if $|\frac{\alpha}{\beta}| \|\hat z\|$ is nicely bounded by $\max(\|A^{-1}\|_2,\|(A+uv^T)^{-1}\|_2)$, then it follows that the residual is bounded by $\|b\|_2$, that is, the SM formula gives a solution that at least has a residual smaller than that of $x=0$. One can then employ iterative refinement (possibly more than one step) to reduce the residual until we arrive at a backward stable solution. Since $\|\hat z\|$ depends on the right-hand side the argument is not a trivial matter of repeating the same analysis. We study iterative refinement in detail in the next section.

\begin{lemma} \label{help_bnd:lem}
Assume that the errors in $\alpha, \beta$, their division and multiplication with $\hat z_i$ are insignificant. If 
\begin{eqnarray}
& |v^T \hat z| > 1.1 \label{hypoth1:eq}
\end{eqnarray}
then 
\begin{equation}
\label{lemmaBnd:eq}
\|\hat y\| +  |\frac{\alpha}{\beta}| \|\hat z\| \leq \check c\|\hat y\|
\end{equation}
where $\check c$ is a constant. 
\end{lemma}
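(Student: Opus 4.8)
The plan is to reduce \eqnref{lemmaBnd:eq} to a single observation: hypothesis \eqnref{hypoth1:eq} keeps the scalar $\beta = 1 + v^T\hat z$ bounded away from $0$, so the correction term $\frac{\alpha}{\beta}\hat z$ cannot be inflated by a tiny denominator. Since \eqnref{lemmaBnd:eq} is equivalent to bounding $\frac{|\alpha|}{|\beta|}\|\hat z\| = \frac{|v^T\hat y|}{|\beta|}\|\hat z\|$ by $(\check c-1)\|\hat y\|$, and since the lemma's assumption lets us treat $\alpha = v^T\hat y$ and $\beta = 1 + v^T\hat z$ as the \emph{exact} scalars formed from the computed $\hat y,\hat z$, everything reduces to a lower bound on $|\beta|$ together with a Cauchy--Schwarz estimate of $|v^T\hat y|$.

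The steps I would carry out are: (i) by the reverse triangle inequality and $|v^T\hat z| > 1.1 > 1$,
\[
|\beta| \;=\; |1 + v^T\hat z| \;\ge\; |v^T\hat z| - 1 \;>\; 0.1 ,
\]
and, a little more sharply, $|\beta| \ge |v^T\hat z| - 1 \ge \tfrac{1}{11}\,|v^T\hat z|$, because $t\mapsto (t-1)/t$ is increasing and equals $\tfrac{1}{11}$ at $t = 1.1$; (ii) hence
\[
\frac{|\alpha|}{|\beta|}\,\|\hat z\| \;=\; \frac{|v^T\hat y|}{|\beta|}\,\|\hat z\| \;\le\; 11\,\frac{\|\hat z\|}{|v^T\hat z|}\,|v^T\hat y| \;\le\; 11\,\|v\|_2\,\frac{\|\hat z\|_2}{|v^T\hat z|}\,\|\hat y\| ,
\]
the last inequality being Cauchy--Schwarz (up to the dimensional constant in the norm equivalence); (iii) adding $\|\hat y\|$ gives \eqnref{lemmaBnd:eq} with $\check c = 1 + 11\,\|v\|_2\,\|\hat z\|_2/|v^T\hat z|$, which using \eqnref{hypoth1:eq} once more is at most $1 + 10\,\|v\|_2\|\hat z\|_2$. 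Every quantity in $\check c$ is already produced inside Algorithm~\ref{SM:alg}, so the bound can be checked on the fly; and one may take $\|v\|_2 = 1$ without loss of generality since $uv^T$ is unchanged under $u\mapsto u/s$, $v\mapsto sv$.

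The point that needs care --- and the reason the statement says only that $\check c$ ``is a constant'' rather than a universal one --- is that $\check c$ genuinely depends on the data through the alignment factor $\|\hat z\|_2/|v^T\hat z| \ge 1/\|v\|_2$: it is a modest absolute constant (about $12$ when $\|v\|_2 = 1$) precisely when $\hat z$ is reasonably aligned with $v$, but it degrades to $\Theta(\|\hat z\|_2)$ when $v$ is nearly orthogonal to $\hat z$ while still satisfying \eqnref{hypoth1:eq}; a $2\times2$ instance is $A = \mathrm{diag}(1,\eps)$, $u = e_2$, $v = e_1 + 2\eps e_2$, $b = e_1$, for which $|v^T\hat z| = 2$ yet $\|\hat y\| + |\frac{\alpha}{\beta}|\,\|\hat z\| = \Theta(\eps^{-1})\,\|\hat y\|$. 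Apart from this, the only remaining work is bookkeeping: confirming that the ``insignificant errors'' hypothesis does license identifying the computed $\hat\alpha,\hat\beta$ with the exact $v^T\hat y$ and $1+v^T\hat z$, and that the norm switches cost only a factor already absorbed into the paper's convention. I expect no genuine obstacle; the lemma is in essence the elementary remark that \eqnref{hypoth1:eq} prevents the denominator $\beta$ from being small.
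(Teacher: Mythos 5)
Your proposal is correct and follows essentially the same route as the paper: the paper also uses \eqnref{hypoth1:eq} to control $1/|\beta|$ (via a Neumann series in $\zeta=(v^T\hat z)^{-1}$, which yields exactly your reverse-triangle-inequality bound $|\beta|\ge|v^T\hat z|-1$ and the same factor $\frac{1}{1-|\zeta|}\le 11$) and then writes $|v^T\hat y|$ and $|v^T\hat z|$ in terms of $\|\hat y\|,\|\hat z\|$ and angle cosines, arriving at $\check c=1+\frac{1}{1-|\zeta|}\frac{|\cos(\theta_{v,\hat y})|}{|\cos(\theta_{v,\hat z})|}$, which is your constant with $|\cos(\theta_{v,\hat y})|$ kept rather than bounded by $1$ via Cauchy--Schwarz. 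Your caveat that $\check c$ is data-dependent and degrades when $v$ is nearly orthogonal to $\hat z$ is accurate and consistent with the paper, which later has to assume $\cos(\theta_{v,\hat y})/\cos(\theta_{v,\hat z})=\mathcal{O}(1)$ for the bound to be useful.
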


\begin{proof}
We start by examining the size of $\frac{1}{\beta} = \frac{1}{1+v^T \hat z}$ where $\hat z = fl(A^{-1} u)$. We have
\[
\frac{1}{1+v^T \hat z} = \frac{1}{v^T \hat z \big( (v^T \hat z)^{-1} + 1 \big)} = \frac{1}{v^T \hat z} \frac{1}{1 + \zeta}
\]
where from \eqnref{hypoth1:eq}, it follows that $\zeta := (v^T \hat z)^{-1}$ has a magnitude less than 1 hence $\frac{1}{1 + \zeta} = 1 - \zeta + \zeta^2 - \dots$ yields
\[
\frac{1}{1+v^T \hat z} = \frac{1}{v^T \hat z} \big( 1 - \frac{1}{v^T \hat z} + \frac{1}{(v^T \hat z)^2} - \dots \big)
= \frac{1}{v^T \hat z} - \frac{1}{(v^T \hat z)^2} + \frac{1}{(v^T \hat z)^3}  - \dots
\]
and so
\begin{equation}
\label{alpha_beta1:eq}
\frac{\alpha}{\beta} = \frac{v^T \hat y}{1+v^T \hat z} = \frac{v^T \hat y}{v^T \hat z} - \frac{v^T \hat y}{(v^T \hat z)^2} + \frac{v^T \hat y}{(v^T \hat z)^3}  - \dots
\end{equation}
Thus, 
\begin{align}
|\frac{\alpha}{\beta}| & \leq 
\frac{|v^T \hat y|}{|v^T \hat z |}\left(1+|\zeta|+|\zeta|^2+\cdots \right) \nonumber \\
&=\frac{1}{1-|\zeta|}\frac{\|v\|\ \|\hat y\| \ |\cos (\theta_{v,\hat y})| }{\|v\|\ \|\hat z \| \ |\cos (\theta_{v,\hat z})|} = \frac{1}{1-|\zeta|}\frac{\|\hat y\| \ |\cos (\theta_{v,\hat y})| }{\|\hat z \| \ |\cos (\theta_{v,\hat z})|} \label{alphaBetaBound:eq}
\end{align}
where $\theta_{v,\hat y}$ and $\theta_{v,\hat z}$ denote the angle between $v$ and $\hat y$ and between $v$ and $\hat z$, respectively. 
Multiplying both sides by $\|\hat z\| > 0$ gives
\begin{align*}
 |\frac{\alpha}{\beta}| \ \|\hat z\| & \leq
\frac{1}{1-|\zeta|}
 \frac{|\cos (\theta_{v,\hat y})| }{|\cos (\theta_{v,\hat z})|} \|\hat y\|. 
\end{align*}
We thus obtain~\eqref{lemmaBnd:eq} with $\check c=1+\frac{1}{1-|\zeta|}\frac{|\cos (\theta_{v,\hat y})| }{|\cos (\theta_{v,\hat z})|}$.
\end{proof}

The following result is an answer to Higham's question on the proportionality of the SM backward error with the condition number of $A$.
\begin{proposition}
Assume $Ay=b$ is solved with a normwise backward stable algorithm. Under the assumptions of the preceding lemma, we have
\begin{equation}
\label{res_bnd_new:eq}
\| r \| \leq \eps_M  \frac{\check c}{1 - c_1 \eps_M \kappa(A)}  \Big( c\ \| A\| + \| A + uv^T\| \Big) \| A^{-1}\| \|b\| + \mathcal{O}(\eps_M^2) 
\end{equation}
where $c_1, c$ are modest multiples of $1$, $c_1 \eps_M \kappa(A) < 1$, and $\check c$ is as in Lemma~\ref{help_bnd:lem}.
\end{proposition}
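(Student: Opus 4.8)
The plan is to combine three facts that are already available: the residual estimate \eqnref{res_bnd:eq}, the cancellation bound of Lemma~\ref{help_bnd:lem}, and an elementary Neumann-series control of $\|\hat y\|$.

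I would begin from \eqnref{res_bnd:eq}, which states --- up to the rounding errors in $\alpha$, $\beta$, their quotient, and the products with the $\hat z_i$ that are assumed insignificant both there and in Lemma~\ref{help_bnd:lem} --- that
\[
\|r\| \le \eps_M\,(c\,\|A\| + \|A+uv^T\|)\Bigl(\|\hat y\| + |\tfrac{\alpha}{\beta}|\,\|\hat z\|\Bigr).
\]
Since hypothesis \eqnref{hypoth1:eq} holds, Lemma~\ref{help_bnd:lem} replaces the bracketed cancellation factor by $\check c\,\|\hat y\|$, so it remains only to bound $\|\hat y\|$. For this I would use that normwise backward stability gives $\hat y = (A+\Delta_1)^{-1}b$ with $\|\Delta_1\| \le c_1\eps_M\|A\|$ as in \eqnref{berr_y:eq}; writing $A+\Delta_1 = A(I+A^{-1}\Delta_1)$ and using $\|A^{-1}\Delta_1\| \le c_1\eps_M\kappa(A) < 1$, the factor $I+A^{-1}\Delta_1$ is invertible with inverse of norm at most $(1 - c_1\eps_M\kappa(A))^{-1}$, so $\|\hat y\| \le \|A^{-1}\|\,\|b\|/(1 - c_1\eps_M\kappa(A))$. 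Substituting these two bounds into the display gives \eqnref{res_bnd_new:eq}, the $\mathcal{O}(\eps_M^2)$ term collecting the suppressed higher-order rounding contributions. (Dividing through by $\|b\|$, using $\|A\|\,\|A^{-1}\| = \kappa(A)$ and $\|A+uv^T\| \lesssim \|A\|$, and then applying the Rigal--Gaches formula \eqnref{berr_formula:eq}, one obtains a backward error of order $\eps_M\kappa(A)$ --- the affirmative answer to Higham's question under these hypotheses.)

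No single inequality above is difficult; the step requiring the most care is the bookkeeping behind the $\mathcal{O}(\eps_M^2)$ remainder and the $(1 - c_1\eps_M\kappa(A))^{-1}$ factor --- one must check that every quantity multiplying $\eps_M$ in \eqnref{res_bnd:eq}, once the Lemma has been used, is $\mathcal{O}(\|A^{-1}\|\,\|b\|)$ (equivalently $\mathcal{O}(1)$ after the normalizations of Section~\ref{sec:source}), so that the discarded errors really are second order and are not secretly hiding a further $\kappa(A)$. The indispensable structural hypothesis is \eqnref{hypoth1:eq}, which keeps $|v^T\hat z|$ --- and hence $|\beta|$ --- safely away from the near-cancellation regime; when that fails, this line of argument collapses, and it is exactly that failure mode that motivates the iterative-refinement analysis in the next section.
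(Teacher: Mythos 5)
Your argument is correct and follows essentially the same route as the paper: apply Lemma~\ref{help_bnd:lem} to the cancellation factor in \eqnref{res_bnd:eq}, then bound $\|\hat y\|$ via standard perturbation of \eqnref{berr_y:eq} to obtain $\|\hat y\| \le \|A^{-1}\|\,\|b\|/(1-c_1\eps_M\kappa(A))$. The paper derives that last bound through the implicit inequality $\|\hat y\| \le \|y\| + c_1\eps_M\kappa(A)\|\hat y\|$ rather than your Neumann-series factorization of $(A+\Delta_1)^{-1}$, but the two are equivalent and yield the identical constant.
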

\begin{proof}
The previous lemma together with \eqnref{res_bnd:eq} give 
\[
\| r \| \leq \eps_M  \check c  (c\ \| A\| + \| A + uv^T\|) \|\hat y\| + \mathcal{O}(\eps_M^2) 
\]
In addition, from \eqnref{berr_y:eq}, $c_1 \eps_M \kappa(A) < 1$ and standard perturbation theory \cite[p. 54]{Stewart98} we can show that\footnote{Let $\delta y := y - \hat y$ denote the forward error in $\hat y$. We have
\[
A\ \delta y = A \hat y - A y  = (b - \Delta_1 \hat y) - Ay = - \Delta_1 \hat y \quad \to \delta y = -A^{-1} \Delta_1\ \hat y \to
\| \delta y \| \leq \|A^{-1}\| \| \Delta_1\|  \|\hat y\|
\]
and so from \eqnref{berr_y:eq} we get 
\[
\| \delta y \| \leq c_1\ \eps_M \kappa(A) \|\hat y\|. \qquad \mbox{ forward error in terms of } \|\hat y\|
\] Now, $\| \hat y \| = \|y + \delta y\| \leq \|y\| + \|\delta y\| \leq \|y \| + c_1\ \eps_M \kappa(A) \|\hat y\|$ yields
$\| \hat y \| \leq \frac{\| y\| }{1 - c_1 \eps_M \kappa(A)} = \frac{\| A^{-1} b\| }{1 - c_1 \eps_M \kappa(A)}.$
}
\begin{equation}
\label{temp_ineq2:eq}
\| \hat y \| \leq \frac{\| A^{-1}\| }{1 - c_1 \eps_M \kappa(A)} \| b\| \\
\end{equation}
from which \eqnref{res_bnd_new:eq} follows.
\end{proof}

This result shows that the backward error of the SM formula is bounded by $\mathcal{O}(\max(\kappa_2(A),\kappa_2(A+uv^T))\eps_M)$, if the assumptions hold. We emphasize that the assumptions, while verifiable on the fly, are nontrivial and not always true. It therefore remains an open problem to address whether the backward error of SM can always be bounded by $\mathcal{O}(\max(\kappa_2(A),\kappa_2(A+uv^T))\eps_M)$ or $\mathcal{O}(\kappa_2(A)\eps_M)$. 

\section{SM with iterative refinement} 
We now examine the stability of SM-IR as in Algorithm~\ref{SMIR:alg}. The main goal of this section is to prove Theorem \ref{SM_resBndCmp:thm}, which claims that the residual in SM admits the required form for invoking Theorem 12.3 in Higham's book~\cite{Higham02}. This can be used to establish that, under natural conditions, iterative refinement enhances backward stability of SM.

Invoking Theorem 12.3 requires a {\em componentwise} bound on the residual which is more intricate than the {\em normwise} bound we established in the previous section. Moreover, here we account for {\em all} rounding errors without exception, even though they do not affect the overall outcome.

The table below outlines all steps in both exact and floating-point arithmetic, along with the relevant bounds. Below and in what follows, given a vector $v\in\mathbb{R}^n$, $|v|\in\mathbb{R}^n$ takes the absolute values elementwise.
\begin{table}[!h]
\begin{tabular}{lll}
exact arithmetic op. & floating-point op. & backward error bound\\
1. $y = A\backslash b$ & $\hat y = fl(y)$ & $(A+\Delta_1) \hat y = b$, where $|\Delta_1| \leq \tilde \gamma_{n^2} e e^T |A|$\\
&  & \\
2. $z = A \backslash u$ & $\hat z = fl(z)$ & $(A+\Delta_2) \hat z = u$, where $|\Delta_2| \leq \tilde \gamma_{n^2} e e^T |A|$\\
&  & \\
3. $\alpha = v^T y$ & $\hat \alpha = fl(\alpha) = v^T \otimes \hat y$ & $\hat \alpha = v^T \hat y + \delta_{\hat y}$ with $|\delta_{\hat y}| \leq \gamma_n |v|^T |\hat y|$\\
&  & \\
4. $\beta = 1 + v^T z$ & $\hat \beta = fl(\beta) = 1 \oplus (v^T \otimes \hat z)$ & $\hat \beta = 1 + (v^T \hat z + \delta_{\hat z}) + \delta_{+}$ with \\
&  & $|\delta_{\hat z}| \leq \gamma_n |v|^T |\hat z|$, $|\delta_{+}| \leq \eps_M (1 + |v^T \otimes \hat z|) $\\
& & and $v^T \otimes \hat z = v^T \hat z + \delta_{\hat z}$\\
&  & \\
5. $\theta = \alpha / \beta$ & $\hat \theta = fl(\theta) =  \hat \alpha \odiv \hat \beta$  & $\hat \theta = \frac{\hat \alpha}{\hat \beta} + \delta_{\hat \theta}$ with 
$|\delta_{\hat \theta}| \leq \frac{\eps_M |\hat \alpha|}{|\hat \beta|} $ \\
&  & \\
6. $w = \theta z$ & $\hat w = fl(w) =  \hat \theta \otimes \hat z$  & $\hat w = \hat \theta \hat z + \delta_{\times}$ with $|\delta_{\times}| \leq \eps_M |\hat \theta| |\hat z|$\\
&  & \\
	                  
7. $x = y - w$ & $\hat x = fl(x) =  \hat y \ominus \hat w$ & $\hat x = \hat y - \hat w + \delta_{-}$ with \\
  		                  & & \ \ \ \  \ \ \ \ \  $|\delta_-| \leq \eps_M |\hat y - \hat w| \leq \eps_M |\hat y| + \eps_M |\hat w|$\\
& & \qquad \qquad \quad $\leq \eps_M |\hat y| + \eps_M |\hat \theta| |\hat z| + \eps_M^2 |\hat \theta| |\hat z|$
\end{tabular}
\end{table}

It is assumed that all steps are performed using backward-stable algorithms. Specifically, the bounds in Steps 1-2 hold for solving the systems with QR factorization computed using Householder or Givens transformations \cite[pp. 361-368]{Higham02}. We now begin bounding the residual
\begin{align}
r &= b - (A+uv^T) \hat x = b - A\hat x - (v^T \hat x) u \nonumber \\
& = b - A\hat y + A \hat w - A \delta_{-} - (v^T \hat x) u \quad \mbox{where } A \hat w = A (\hat \theta \hat z + \delta_{\times}) = \hat \theta A \hat z + A \delta_{\times} \nonumber \\
& = b - ( b - \Delta_1 \hat y) + A (\hat \theta \hat z + \delta_{\times}) - A \delta_{-} - (v^T \hat x) u \nonumber \\
& =  \Delta_1 \hat y - \hat \theta \Delta_2 \hat z  + A \delta_{\times} - A \delta_{-} + u (\hat \theta - v^T \hat x) \label{res_temp:eq}
\end{align}
The only term in the last expression that does not explicitly contain a delta-type factor of order $\eps_M$ is the last one. Thus, we focus on bounding $\hat \theta - v^T \hat x$, whose exact-arithmetic counterpart, $\theta - v^T x$, is zero.
\begin{lemma} \label{thetaZ:lem}
\begin{align*}
|\hat \theta - v^T \hat x| & \leq \gamma_{n+1} |v|^T |\hat y| +
 \gamma_{n+4} \frac{|\hat \alpha|}{|\hat \beta|} |v|^T |\hat z| + 2 \eps_M \frac{|\hat \alpha|}{|\hat \beta|}  + \mathcal{O}(\eps_M^2)
\end{align*}
\end{lemma}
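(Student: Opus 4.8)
The plan is to expand $\hat\theta - v^T\hat x$ by substituting the floating-point relations from Steps 3--7 of the table and tracking which terms survive. First I would write $\hat x = \hat y - \hat w + \delta_- = \hat y - \hat\theta\hat z - \delta_\times + \delta_-$, so that
\[
\hat\theta - v^T\hat x = \hat\theta - v^T\hat y + \hat\theta\, v^T\hat z + v^T\delta_\times - v^T\delta_-.
\]
The first three terms are the crucial ones: $\hat\theta - v^T\hat y + \hat\theta\, v^T\hat z = \hat\theta(1 + v^T\hat z) - v^T\hat y$. The idea is that $1 + v^T\hat z$ is close to $\hat\beta$ and $v^T\hat y$ is close to $\hat\alpha$, while $\hat\theta$ is close to $\hat\alpha/\hat\beta$, so this combination nearly telescopes to zero, leaving only $\eps_M$-sized residue.

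Concretely, I would use Step 5, $\hat\theta = \hat\alpha/\hat\beta + \delta_{\hat\theta}$, to get $\hat\theta(1+v^T\hat z) - v^T\hat y = \tfrac{\hat\alpha}{\hat\beta}(1+v^T\hat z) - v^T\hat y + \delta_{\hat\theta}(1+v^T\hat z)$. Then I would replace $1 + v^T\hat z$ using Step 4: from $\hat\beta = 1 + v^T\hat z + \delta_{\hat z} + \delta_+$ we have $1 + v^T\hat z = \hat\beta - \delta_{\hat z} - \delta_+$, so $\tfrac{\hat\alpha}{\hat\beta}(1+v^T\hat z) = \hat\alpha - \tfrac{\hat\alpha}{\hat\beta}(\delta_{\hat z}+\delta_+)$. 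Next I would replace $v^T\hat y$ using Step 3: $v^T\hat y = \hat\alpha - \delta_{\hat y}$. The $\hat\alpha$ terms cancel exactly, leaving
\[
\hat\theta - v^T\hat x = \delta_{\hat y} - \tfrac{\hat\alpha}{\hat\beta}(\delta_{\hat z}+\delta_+) + \delta_{\hat\theta}(1+v^T\hat z) + v^T\delta_\times - v^T\delta_-.
\]
Now I would take absolute values termwise and substitute each bound from the table: $|\delta_{\hat y}| \le \gamma_n|v|^T|\hat y|$; $|\delta_{\hat z}| \le \gamma_n|v|^T|\hat z|$; $|\delta_+| \le \eps_M(1 + |v^T\otimes\hat z|)$, which (using $v^T\otimes\hat z = v^T\hat z + \delta_{\hat z}$ and $|\hat\beta| = |1+v^T\hat z+\delta_{\hat z}+\delta_+|$) one bounds by something like $\eps_M|\hat\beta| + \mathcal{O}(\eps_M^2)$, making $\tfrac{|\hat\alpha|}{|\hat\beta|}|\delta_+| \le \eps_M|\hat\alpha|/|\hat\beta|\cdot(\ldots)$; $|\delta_{\hat\theta}| \le \eps_M|\hat\alpha|/|\hat\beta|$ and $|1+v^T\hat z| \le |\hat\beta| + \mathcal{O}(\eps_M)$ so $|\delta_{\hat\theta}(1+v^T\hat z)| \le \eps_M|\hat\alpha| + \mathcal{O}(\eps_M^2)$, which I would fold into the $\tfrac{|\hat\alpha|}{|\hat\beta|}|v|^T|\hat z|$ or $\tfrac{|\hat\alpha|}{|\hat\beta|}$ terms via $|\hat\alpha| = |\hat\alpha/\hat\beta|\,|\hat\beta|$; and $|\delta_\times| \le \eps_M|\hat\theta||\hat z|$, $|\delta_-| \le \eps_M|\hat y| + \eps_M|\hat\theta||\hat z| + \mathcal{O}(\eps_M^2)$, noting $|\hat\theta| = |\hat\alpha/\hat\beta| + \mathcal{O}(\eps_M)$. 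Finally I would collect the coefficients of $|v|^T|\hat y|$, of $\tfrac{|\hat\alpha|}{|\hat\beta|}|v|^T|\hat z|$, and of $\tfrac{|\hat\alpha|}{|\hat\beta|}$, and simplify the accumulated $\gamma$ factors using the standard rules $\gamma_n + \eps_M \le \gamma_{n+1}$, $\gamma_n + \gamma_m + \gamma_n\gamma_m \le \gamma_{n+m}$, to land on the stated $\gamma_{n+1}$, $\gamma_{n+4}$, and $2\eps_M$ constants, with everything else absorbed into $\mathcal{O}(\eps_M^2)$.

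The main obstacle I expect is the careful bookkeeping of the $\gamma$-constants — in particular verifying that the various $\eps_M$ contributions (from $\delta_{\hat\theta}$, $\delta_+$, $\delta_\times$, $\delta_-$, and the cross-terms when converting $|\hat\alpha|$, $|\hat\theta|$ between their exact and perturbed forms) really do add up to coefficients no larger than $\gamma_{n+1}$, $\gamma_{n+4}$, and $2\eps_M$ respectively, rather than something slightly bigger. A secondary subtlety is the replacement $|1+v^T\hat z| \le |\hat\beta| + \mathcal{O}(\eps_M)$ and $\tfrac{|\hat\alpha|}{|\hat\beta|}|\delta_+|$: one must be a little careful that dividing by $|\hat\beta|$ does not blow anything up, but since these appear already multiplied by $\eps_M$, any such effect is pushed into $\mathcal{O}(\eps_M^2)$, and under the lemma's hypothesis $|v^T\hat z| > 1.1$ (hence $|\hat\beta|$ bounded away from $0$) this is harmless. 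Everything else is a routine triangle-inequality expansion.
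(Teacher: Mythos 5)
Your proposal is correct and follows essentially the same route as the paper: the identical telescoping substitutions ($\hat x = \hat y - \hat\theta\hat z - \delta_\times + \delta_-$, then $\hat\theta = \hat\alpha/\hat\beta + \delta_{\hat\theta}$, $1+v^T\hat z = \hat\beta - \delta_{\hat z} - \delta_+$, $v^T\hat y = \hat\alpha - \delta_{\hat y}$) yielding the same exact-cancellation identity, followed by the same termwise bounding that produces the $\gamma_{n+1}$, $\gamma_{n+4}$, and $2\eps_M$ coefficients. The only cosmetic difference is that the paper bounds $(1+v^T\hat z)\delta_{\hat\theta}$ directly via $|1+v^T\hat z|\le 1+|v|^T|\hat z|$ rather than passing through $|\hat\beta|$, and note that the hypothesis $|v^T\hat z|>1.1$ you invoke belongs to the earlier lemma and is not needed here.
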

\begin{proof}
Step 7 yields
\[
v^T \hat x = v^T \hat y - v^T \hat w + v^T  \delta_{-} = v^T \hat y - v^T (\hat \theta \hat z + \delta_{\times}) + v^T  \delta_{-} = v^T \hat y - \hat \theta v^T \hat z - v^T \delta_{\times} + v^T  \delta_{-}.
\]
Hence
\[
\hat \theta - v^T \hat x = \hat \theta - v^T \hat y + \hat \theta v^T \hat z + v^T \delta_{\times} - v^T  \delta_{-} = (1 + v^T \hat z) \hat \theta - v^T \hat y + v^T (\delta_{\times} -  \delta_{-}). 
\]
Substituting $\hat \theta = \frac{\hat \alpha}{\hat \beta} + \delta_{\hat \theta}$ from Step 5 gives
\begin{equation}
\label{thetaZ:eq}
\hat \theta - v^T \hat x = (1 + v^T \hat z) \frac{\hat \alpha}{\hat \beta} - v^T \hat y + (1 + v^T \hat z) \delta_{\hat \theta} + v^T (\delta_{\times} -  \delta_{-}). 
\end{equation}

On the other hand, from Step 4 we have $\hat \beta = 1 + v^T \hat z + \delta_{\hat z} + \delta_{+}$. Substituting $1 + v^T \hat z = \hat \beta -  \delta_{\hat z} - \delta_{+}$ into \eqnref{thetaZ:eq} yields
\begin{align}
\hat \theta - v^T \hat x &= (\hat \beta -  \delta_{\hat z} - \delta_{+}) \frac{\hat \alpha}{\hat \beta} - v^T \hat y + (1 + v^T \hat z) \delta_{\hat \theta} + v^T (\delta_{\times} -  \delta_{-}) \nonumber \\
& = \hat \alpha - v^T \hat y - \frac{\hat \alpha}{\hat \beta} \delta_{\hat z}  - \frac{\hat \alpha}{\hat \beta} \delta_{+} + (1 + v^T \hat z) \delta_{\hat \theta} + v^T (\delta_{\times} -  \delta_{-}) \nonumber \\
& = \delta_{\hat y} - \frac{\hat \alpha}{\hat \beta} \delta_{\hat z}  - \frac{\hat \alpha}{\hat \beta} \delta_{+} + (1 + v^T \hat z) \delta_{\hat \theta} + v^T (\delta_{\times} -  \delta_{-}) \label{theta_minus_vx:eq}
\end{align}
which implies
\begin{align}
|\hat \theta - v^T \hat x| & \leq |\delta_{\hat y}| + |\frac{\hat \alpha}{\hat \beta}| |\delta_{\hat z}|  + |\frac{\hat \alpha}{\hat \beta}| |\delta_{+}| + (1 + |v|^T |\hat z|) |\delta_{\hat \theta}| + |v|^T (|\delta_{\times}| + |\delta_{-}|) \nonumber \\
& =  |\delta_{\hat y}| + |\frac{\hat \alpha}{\hat \beta}| \big( |\delta_{\hat z}|  + |\delta_{+}| \big) + (1 + |v|^T |\hat z|) \eps_M \frac{|\hat \alpha|}{|\hat \beta|} + |v|^T (|\delta_{\times}| + |\delta_{-}|) \nonumber\\
& =  |\delta_{\hat y}| + |\frac{\hat \alpha}{\hat \beta}| \Big( |\delta_{\hat z}|  + |\delta_{+}| + (1 + |v|^T |\hat z|) \eps_M \Big) + |v|^T (|\delta_{\times}| + |\delta_{-}|) 
\label{thetaZ2:eq}
\end{align}
Looking at the last term, recall from Steps 6 and 7 that
\[
|\delta_{\times}| \leq \eps_M |\hat \theta| |\hat z| \qquad \mbox{ and } \qquad
|\delta_-| \leq \eps_M |\hat y| + \eps_M |\hat \theta| |\hat z| + \mathcal{O}(\eps_M^2)
\]
which give
\[
|\delta_{\times}| + |\delta_-| \leq 2 \eps_M |\hat \theta| |\hat z|  + \eps_M |\hat y| +  \mathcal{O}(\eps_M^2).
\]
Next, from Step 5 we have
\begin{equation} \label{thetaHatBnd:eq}
|\hat \theta|  \leq  |\frac{\hat \alpha}{\hat \beta}| + |\delta_{\hat \theta}| 
\leq  |\frac{\hat \alpha}{\hat \beta}| + \eps_M  |\frac{\hat \alpha}{\hat \beta}|
\end{equation}
which yields
\begin{equation} \label{twoDelta:eq}
|\delta_{\times}| + |\delta_-| \leq 2 \eps_M |\frac{\hat \alpha}{\hat \beta}| |\hat z|  + \eps_M |\hat y| +  \mathcal{O}(\eps_M^2).
\end{equation}
Substituting this into \eqnref{thetaZ2:eq} gives
\[
|\hat \theta - v^T \hat x| \leq
 |\delta_{\hat y}| + |\frac{\hat \alpha}{\hat \beta}| \Big( |\delta_{\hat z}|  + |\delta_{+}| + (1 + |v|^T |\hat z|) \eps_M + 2 \eps_M |v|^T |\hat z| \Big) + \eps_M |v|^T |\hat y| +  \mathcal{O}(\eps_M^2)
\]
Also we know from Step 4 that
\begin{align*}
|\delta_{+}|  &\leq \eps_M (1 + |v|^T |\hat z| + |\delta_{\hat z}|) 
\leq \eps_M (1 + |v|^T |\hat z| + \gamma_n |v|^T |\hat z|) = \eps_M (1 + |v|^T |\hat z|) + \mathcal{O}(\eps_M^2)
\end{align*}
which gives
\begin{align*}
|\hat \theta - v^T \hat x| & \leq
 |\delta_{\hat y}| + |\frac{\hat \alpha}{\hat \beta}| \Big( |\delta_{\hat z}|  + 2 \eps_M (1 + |v|^T |\hat z|) + 2 \eps_M |v|^T |\hat z| \Big) + \eps_M |v|^T |\hat y| +  \mathcal{O}(\eps_M^2)\\
 & \leq  \gamma_n |v|^T |\hat y| + \eps_M |v|^T |\hat y| + |\frac{\hat \alpha}{\hat \beta}| \Big( \gamma_n |v|^T |\hat z|  + 2 \eps_M (1 + |v|^T |\hat z|) + 2 \eps_M |v|^T |\hat z| \Big)  +  \mathcal{O}(\eps_M^2)\\
 & =  (\gamma_n + \eps_M) |v|^T |\hat y| + |\frac{\hat \alpha}{\hat \beta}| \Big( (\gamma_n + 4 \eps_M) |v|^T |\hat z|  + 2 \eps_M \Big)  +  \mathcal{O}(\eps_M^2)
\end{align*}
Since $(\gamma_n + \eps_M) \leq \gamma_{n+1}$ and  $(\gamma_n + 4\eps_M) \leq \gamma_{n+4}$, the proof is complete.
\end{proof}

To obtain a bound on the residual, we require the following two auxiliary results.
\begin{lemma}
\begin{equation} \label{yHatBnd:eq}
|\hat y| \leq (1 + \eps_M) |\hat x| + |\frac{\hat \alpha}{\hat \beta}| (1 + 3\eps_M) |\hat z| + \mathcal{O}(\eps_M^2).
\end{equation}
\end{lemma}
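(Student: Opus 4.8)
The plan is to invert the relationship in Step~7 of the table, expressing $\hat y$ in terms of $\hat x$ rather than the other way around. From Step~7 we have $\hat x = \hat y - \hat w + \delta_-$, hence $\hat y = \hat x + \hat w - \delta_-$, and therefore componentwise $|\hat y| \leq |\hat x| + |\hat w| + |\delta_-|$. The task then reduces to bounding $|\hat w|$ and $|\delta_-|$ in terms of $|\hat x|$ and $|\frac{\hat\alpha}{\hat\beta}|\,|\hat z|$, absorbing everything else into $\mathcal{O}(\eps_M^2)$.

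First I would handle $|\hat w|$. From Step~6, $\hat w = \hat\theta\hat z + \delta_\times$ with $|\delta_\times| \leq \eps_M|\hat\theta|\,|\hat z|$, so $|\hat w| \leq (1+\eps_M)|\hat\theta|\,|\hat z|$. Then I substitute the bound \eqnref{thetaHatBnd:eq}, namely $|\hat\theta| \leq (1+\eps_M)|\frac{\hat\alpha}{\hat\beta}|$, to get $|\hat w| \leq (1+\eps_M)^2 |\frac{\hat\alpha}{\hat\beta}|\,|\hat z| = (1+2\eps_M)|\frac{\hat\alpha}{\hat\beta}|\,|\hat z| + \mathcal{O}(\eps_M^2)$. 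For $|\delta_-|$, the table's Step~7 already records $|\delta_-| \leq \eps_M|\hat y| + \eps_M|\hat\theta|\,|\hat z| + \mathcal{O}(\eps_M^2)$; since this term carries an explicit factor $\eps_M$, I may replace $|\hat y|$ inside it by $|\hat x| + |\hat w| + |\delta_-|$ (or more crudely by a first-order expression) at the cost of only $\mathcal{O}(\eps_M^2)$ terms, and similarly replace $|\hat\theta|\,|\hat z|$ by $|\frac{\hat\alpha}{\hat\beta}|\,|\hat z|$ up to $\mathcal{O}(\eps_M^2)$. Thus $|\delta_-| \leq \eps_M|\hat x| + \eps_M|\frac{\hat\alpha}{\hat\beta}|\,|\hat z| + \mathcal{O}(\eps_M^2)$.

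Combining, $|\hat y| \leq |\hat x| + \big[(1+2\eps_M)|\frac{\hat\alpha}{\hat\beta}|\,|\hat z|\big] + \big[\eps_M|\hat x| + \eps_M|\frac{\hat\alpha}{\hat\beta}|\,|\hat z|\big] + \mathcal{O}(\eps_M^2) = (1+\eps_M)|\hat x| + (1+3\eps_M)|\frac{\hat\alpha}{\hat\beta}|\,|\hat z| + \mathcal{O}(\eps_M^2)$, which is exactly \eqnref{yHatBnd:eq}. The only mild subtlety — not really an obstacle — is the self-referential appearance of $|\hat y|$ inside the bound for $|\delta_-|$; this is resolved harmlessly because that occurrence is already multiplied by $\eps_M$, so substituting the zeroth-order estimate $|\hat y| = |\hat x| + |\frac{\hat\alpha}{\hat\beta}|\,|\hat z| + \mathcal{O}(\eps_M)$ there introduces only $\mathcal{O}(\eps_M^2)$ corrections. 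Everything else is routine bookkeeping of constants, collecting $(1+\eps_M)$-type factors into first order in $\eps_M$.
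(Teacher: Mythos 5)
Your proof is correct and follows essentially the same route as the paper's: invert Step~7 to get $|\hat y|\leq|\hat x|+|\hat w|+|\delta_-|$, bound $|\hat w|$ via Step~6 and \eqnref{thetaHatBnd:eq}, and bound $|\delta_-|$. The only cosmetic difference is how the self-referential $\eps_M|\hat y|$ term is handled — the paper moves it to the left-hand side and multiplies by $(1-\eps_M)^{-1}=1+\eps_M+\mathcal{O}(\eps_M^2)$, whereas you substitute a zeroth-order estimate for $|\hat y|$; both are equivalent to first order, and the tiny bookkeeping discrepancy in the coefficient of $\eps_M|\frac{\hat\alpha}{\hat\beta}||\hat z|$ (three versus four) is immaterial at the level of modest constants used throughout.
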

\begin{proof}
Applying the reverse triangle inequality to $\hat x = \hat y - \hat w + \delta_{-}$ gives
$|\hat y| \leq  |\hat x| + |\hat w| + |\delta_-| $. From  $\hat w = \hat \theta \hat z + \delta_{\times}$ with $|\delta_{\times}| \leq \eps_M |\hat \theta| |\hat z|$ and \eqnref{thetaHatBnd:eq}
we obtain
\[
|\hat w| \leq |\hat \theta| |\hat z| (1 + \eps_M)
  \leq \Big( |\frac{\hat \alpha}{\hat \beta}| (1 + \eps_M) + \eps_M  |\frac{\hat \alpha}{\hat \beta}| \Big) |\hat z| + \mathcal{O}(\eps_M^2).
\]
Also, $|\delta_-| \leq \eps_M |\hat y| + \eps_M |\frac{\hat \alpha}{\hat \beta}|  |\hat z| + \mathcal{O}(\eps_M^2)$ leading to
\[
|\hat y| \leq |\hat x| + |\frac{\hat \alpha}{\hat \beta}| (1 + 3\eps_M) |\hat z| + \eps_M |\hat y|  + \mathcal{O}(\eps_M^2).
\]
Applying $(1-\eps_M)^{-1} = 1 + \eps_M + \mathcal{O}(\eps_M^2)$ yields the result.
\end{proof}

\begin{lemma}
\begin{equation} \label{zHatBnd:eq}
|\hat z| \leq |A^{-1} u| + \tilde \gamma_{n^2} |A^{-1}| e e^T |A| |A^{-1} u| +  \mathcal{O}(\eps_M^2).
\end{equation}
\end{lemma}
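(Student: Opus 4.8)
\textbf{Proof proposal for \eqnref{zHatBnd:eq}.}
The plan is to start from the backward error relation for Step~2, namely $(A+\Delta_2)\hat z = u$ with $|\Delta_2| \leq \tilde\gamma_{n^2}\, ee^T|A|$, and solve it for $\hat z$. Writing $A+\Delta_2 = A(I + A^{-1}\Delta_2)$ and assuming (as is implicit throughout, since we are working to first order in $\eps_M$ and $\kappa(A)\eps_M$ is controlled) that $I+A^{-1}\Delta_2$ is invertible, I would expand the inverse as a Neumann series,
\[
\hat z = (I + A^{-1}\Delta_2)^{-1} A^{-1} u = A^{-1}u - A^{-1}\Delta_2\,(A^{-1}u) + \mathcal{O}(\eps_M^2),
\]
where the remainder is $\mathcal{O}(\|A^{-1}\Delta_2\|^2 \|A^{-1}u\|) = \mathcal{O}(\eps_M^2)$ because $\|\Delta_2\| = \mathcal{O}(\eps_M\|A\|)$.

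Next I would pass to componentwise absolute values, using the elementary inequality $|Mw|\le |M|\,|w|$ (and $|MN|\le|M|\,|N|$) for conformable matrices and vectors. Keeping the factor $A^{-1}u$ together (rather than splitting it as $|A^{-1}|\,|u|$) gives
\[
|\hat z| \le |A^{-1}u| + |A^{-1}|\,|\Delta_2|\,|A^{-1}u| + \mathcal{O}(\eps_M^2).
\]
Substituting the bound $|\Delta_2|\le \tilde\gamma_{n^2}\, ee^T|A|$ from Step~2 and noting that the multiplication by a nonnegative matrix preserves the inequality then yields exactly \eqnref{zHatBnd:eq}.

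There is essentially no hard part here: the only point requiring a word of care is the justification of the first-order Neumann expansion, i.e. that the higher-order terms are genuinely absorbed into $\mathcal{O}(\eps_M^2)$; this is the same tacit first-order regime (with $\eps_M\kappa(A)$ small) already used in~\eqnref{temp_ineq2:eq} and elsewhere, so I would simply invoke it. Everything else is the routine manipulation of componentwise absolute-value inequalities.
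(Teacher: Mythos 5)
Your argument is correct and is essentially identical to the paper's own proof: both write $A+\Delta_2 = A(I+A^{-1}\Delta_2)$, expand $(I+A^{-1}\Delta_2)^{-1}$ to first order, and bound the resulting perturbation factor componentwise by $\tilde\gamma_{n^2}|A^{-1}|ee^T|A|$ while keeping $A^{-1}u$ together. No gaps.
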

\begin{proof}
From Step 2 we have
\[
\hat z = (A+\Delta_2)^{-1} u =  \big(A (I+ A^{-1}\Delta_2) \big)^{-1} u =: (I+ F) A^{-1} u 
\] 
where $|F| \leq \tilde \gamma_{n^2} |A^{-1}| e e^T |A| + \mathcal{O}(\eps_M^2)$.
\end{proof}

Our main result is the following.
\begin{theorem} \label{SM_resBndCmp:thm}
The SM residual satisfies 
\begin{equation} \label{SM_resBndCmp:eq}
|b - A\hat x - (v^T \hat x) u| \leq \eps_M \ \big( g(A,u,v) |\hat x| + h(A, u, v, b) \big)
\end{equation}
 where 
\[
g(A,u,v) = (c n^2 e e^T + I) |A| + (n+1) |u| |v|^T
\]
and
\begin{equation} \label{h_formula:eq}
h(A,u,v, b) = \frac{|\hat \alpha|}{|\hat \beta|} \Big( (2d n^2 e e^T + 3I) |A| + 2 (n+4) |u| |v|^T\Big) |A^{-1} u| + 2 \frac{|\hat \alpha|}{|\hat \beta|} |u|.
\end{equation}
in which $c$ and $d$ are small integer constants\footnote{appearing in the notation $\tilde \gamma_{k} = \gamma_{ck}  = \frac{c k \eps_M}{1 - c k \eps_M}$.}.
\end{theorem}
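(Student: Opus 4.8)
The plan is to assemble the componentwise residual bound from the pieces already laid out in the table and the three preceding lemmas, starting from the exact expression for $r$ in \eqnref{res_temp:eq}. Recall that
\[
r = \Delta_1 \hat y - \hat \theta \Delta_2 \hat z + A \delta_{\times} - A \delta_{-} + u (\hat \theta - v^T \hat x),
\]
so the first move is to take absolute values termwise and apply the triangle inequality. For the first two terms I would use the componentwise backward error bounds $|\Delta_1|, |\Delta_2| \leq \tilde\gamma_{n^2} e e^T |A|$ from Steps 1--2, together with $|\hat\theta| \leq (1+\eps_M)\frac{|\hat\alpha|}{|\hat\beta|}$ from \eqnref{thetaHatBnd:eq}, giving contributions of the shape $\tilde\gamma_{n^2} e e^T |A| |\hat y|$ and $\frac{|\hat\alpha|}{|\hat\beta|}\tilde\gamma_{n^2} e e^T |A| |\hat z|$. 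For $A\delta_\times$ and $A\delta_-$ I would substitute the bounds $|\delta_\times| \leq \eps_M |\hat\theta||\hat z|$ and $|\delta_-| \leq \eps_M|\hat y| + \eps_M|\hat\theta||\hat z| + \mathcal{O}(\eps_M^2)$ from Steps 6--7 (again converting $|\hat\theta|$ to $\frac{|\hat\alpha|}{|\hat\beta|}$), producing terms $\eps_M |A| |\hat y|$ and $\eps_M |A| \frac{|\hat\alpha|}{|\hat\beta|}|\hat z|$. For the final and most delicate term $u(\hat\theta - v^T\hat x)$, I would invoke Lemma~\ref{thetaZ:lem} to bound $|\hat\theta - v^T\hat x|$ by $\gamma_{n+1}|v|^T|\hat y| + \gamma_{n+4}\frac{|\hat\alpha|}{|\hat\beta|}|v|^T|\hat z| + 2\eps_M\frac{|\hat\alpha|}{|\hat\beta|} + \mathcal{O}(\eps_M^2)$, so that this term contributes $|u|$ times that quantity.

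At this stage the bound is expressed in terms of $|\hat y|$ and $|\hat z|$, but the target \eqnref{SM_resBndCmp:eq} must be in terms of $|\hat x|$ and data-only quantities. The second step is therefore to eliminate $|\hat y|$ using Lemma giving \eqnref{yHatBnd:eq}, namely $|\hat y| \leq (1+\eps_M)|\hat x| + (1+3\eps_M)\frac{|\hat\alpha|}{|\hat\beta|}|\hat z| + \mathcal{O}(\eps_M^2)$. Every occurrence of $|\hat y|$ (which always appears multiplied by something already $\mathcal{O}(\eps_M)$) gets replaced: the $|\hat x|$ part feeds into $g(A,u,v)$, and the $\frac{|\hat\alpha|}{|\hat\beta|}|\hat z|$ part merges with the $|\hat z|$ terms that will go into $h$. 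The third step is to eliminate $|\hat z|$ using Lemma giving \eqnref{zHatBnd:eq}, $|\hat z| \leq |A^{-1}u| + \tilde\gamma_{n^2}|A^{-1}|ee^T|A||A^{-1}u| + \mathcal{O}(\eps_M^2)$; since $|\hat z|$ always appears against an $\eps_M$-factor, only the leading term $|A^{-1}u|$ survives up to $\mathcal{O}(\eps_M^2)$, which is exactly the shape seen in \eqnref{h_formula:eq}.

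The final step is pure bookkeeping: collect all terms multiplying $|\hat x|$ into $\eps_M g(A,u,v)|\hat x|$, checking that the $|A||\hat x|$ coefficients (from the $\Delta_1\hat y$, $A\delta_-$ terms via \eqnref{yHatBnd:eq}) sum to something of the form $(cn^2 ee^T + I)|A|$ after absorbing $\tilde\gamma_{n^2} = \gamma_{cn^2}$ and the $\eps_M$'s, and that the $|u||v|^T|\hat x|$ coefficient (coming from the $\gamma_{n+1}|v|^T|\hat y|$ piece of Lemma~\ref{thetaZ:lem} after substituting \eqnref{yHatBnd:eq}) gives the $(n+1)|u||v|^T$ term. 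Everything else is collected into $\eps_M h(A,u,v,b)$, verifying that the $|A^{-1}u|$-multiplied bracket is $\frac{|\hat\alpha|}{|\hat\beta|}\big((2dn^2 ee^T + 3I)|A| + 2(n+4)|u||v|^T\big)$ — the factor $2d$ and $2(n+4)$ arising because $|\hat z|$ shows up both directly and through the $\frac{|\hat\alpha|}{|\hat\beta|}|\hat z|$ remainder of $|\hat y|$ — plus the standalone $2\frac{|\hat\alpha|}{|\hat\beta|}|u|$ from the $2\eps_M\frac{|\hat\alpha|}{|\hat\beta|}$ term of Lemma~\ref{thetaZ:lem}. The main obstacle is not conceptual but combinatorial: one must track how each $|\hat y|$- and $|\hat z|$-term splits under the two substitutions and confirm the integer constants $c, d$ and the linear-in-$n$ factors come out exactly as claimed, discarding every genuinely $\mathcal{O}(\eps_M^2)$ cross term along the way. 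No single inequality is hard; the care needed is in not double-counting and in matching the precise coefficients in $g$ and $h$.
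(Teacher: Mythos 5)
Your proposal is correct and follows essentially the same route as the paper's proof: start from \eqnref{res_temp:eq}, apply the triangle inequality with the table bounds, Lemma~\ref{thetaZ:lem}, \eqnref{thetaHatBnd:eq} and \eqnref{twoDelta:eq}, collect coefficients of $|\hat y|$ and $|\hat z|$, then substitute \eqnref{yHatBnd:eq} and \eqnref{zHatBnd:eq} and absorb higher-order terms. Your accounting for the doubled constants ($2d$, $2(n+4)$, from $|\hat z|$ entering both directly and via the $\frac{|\hat\alpha|}{|\hat\beta|}|\hat z|$ remainder of $|\hat y|$) matches the paper's step $\gamma_{n+1}+\gamma_{n+4}\leq 2\gamma_{n+4}$.
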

\begin{proof}
Beginning with \eqnref{res_temp:eq},
\[
|r|  \leq  |\Delta_1| |\hat y| + |\hat \theta| |\Delta_2| |\hat z|  + |A| (|\delta_{\times}| +|\delta_{-}|) + |u| |\hat \theta - v^T \hat x|
\]
we apply Lemma \ref{thetaZ:lem} to obtain
\begin{align*}
|r|  & \leq  |\Delta_1| |\hat y| + |\hat \theta| |\Delta_2| |\hat z|  + |A| (|\delta_{\times}| +|\delta_{-}|)  \\
& \qquad + \gamma_{n+1} |u| |v|^T |\hat y| +
 \gamma_{n+4} \frac{|\hat \alpha|}{|\hat \beta|}  |u| |v|^T |\hat z| + 2 \eps_M \frac{|\hat \alpha|}{|\hat \beta|} |u|  + \mathcal{O}(\eps_M^2).
\end{align*}
Applying \eqnref{thetaHatBnd:eq} and \eqnref{twoDelta:eq} yield
\begin{align*}
|r|  & \leq  |\Delta_1| |\hat y| + \Big( |\frac{\hat \alpha}{\hat \beta}| + \eps_M  |\frac{\hat \alpha}{\hat \beta}| \Big) |\Delta_2| |\hat z|  \\
& \qquad + |A| \Big( 2 \eps_M |\frac{\hat \alpha}{\hat \beta}| |\hat z|  + \eps_M |\hat y| +  \mathcal{O}(\eps_M^2) \Big) \\
& \qquad + \gamma_{n+1} |u| |v|^T |\hat y| +
 \gamma_{n+4} \frac{|\hat \alpha|}{|\hat \beta|}  |u| |v|^T |\hat z| + 2 \eps_M \frac{|\hat \alpha|}{|\hat \beta|} |u|  + \mathcal{O}(\eps_M^2) \\
 & = \Big( |\Delta_1| + \eps_M |A| + \gamma_{n+1} |u| |v|^T \Big) |\hat y| \\
 & \qquad + \frac{|\hat \alpha|}{|\hat \beta|} \Big( |\Delta_2| + 2 \eps_M |A|
+  \gamma_{n+4}  |u| |v|^T \Big) |\hat z| + 2 \eps_M \frac{|\hat \alpha|}{|\hat \beta|} |u|  + \mathcal{O}(\eps_M^2) \\
 & \leq \Big( (\tilde \gamma_{n^2} e e^T + \eps_M I) |A| + \gamma_{n+1} |u| |v|^T \Big) |\hat y| \\
 & \qquad +  \frac{|\hat \alpha|}{|\hat \beta|} \Big( (\tilde \gamma_{n^2} e e^T + 2\eps_M I
) |A| +  \gamma_{n+4}  |u| |v|^T \Big) |\hat z| + 2 \eps_M \frac{|\hat \alpha|}{|\hat \beta|} |u|  + \mathcal{O}(\eps_M^2).
\end{align*}
Next, we apply \eqnref{yHatBnd:eq} and \eqnref{zHatBnd:eq}, pushing higher-order terms to $\mathcal{O}(\eps_M^2)$ to obtain
\begin{align*}
|r| & \leq \Big( (\tilde \gamma_{n^2} e e^T + \eps_M I) |A| + \gamma_{n+1} |u| |v|^T \Big) |\hat x| \\
 & +  \frac{|\hat \alpha|}{|\hat \beta|} \Big( (2\tilde \gamma_{n^2} e e^T + 3\eps_M I
) |A| +  (\gamma_{n+1}  + \gamma_{n+4})  |u| |v|^T \Big) |A^{-1} u| 
+ 2 \eps_M \frac{|\hat \alpha|}{|\hat \beta|} |u|  + \mathcal{O}(\eps_M^2).
\end{align*}
Note that $\gamma_{n+1}  + \gamma_{n+4} \leq 2\gamma_{n+4}$.
\end{proof}

In addition to Theorem~\ref{SM_resBndCmp:thm}, we require a bound on the difference between the residual $r$ and the computed residual $\hat r$ in SM to be able to apply \cite[Thm. 12.3]{Higham02}.

\begin{lemma} Let $r := b - A\hat x - (v^T \hat x) u$ and $\hat r = fl(r)$. Then, we have
\begin{equation} \label{rHatBnd:eq}
|r - \hat r| \leq \eps_M \ t(A,u,v,\hat x, b)
\end{equation}
where
\begin{equation} \label{t_rep:eq}
t(A,u,v,\hat x, b) = \frac{\gamma_{n+2}}{\eps_M} \Big(  |b| + (|A|  + |u| |v|^T)|\hat x| \Big).
\end{equation}
\end{lemma}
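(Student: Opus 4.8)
The plan is to unwind the floating-point evaluation of $r = b - A\hat x - (v^T\hat x)u$ into elementary operations, attach the standard componentwise rounding-error bounds to each, and collect terms, exactly in the spirit of the table preceding Theorem~\ref{SM_resBndCmp:thm}. The computed residual is formed as $\hat p = fl(A\hat x)$, $\hat\sigma = fl(v^T\hat x)$, $\hat q = fl(\hat\sigma\, u)$, and finally $\hat r = fl\big(fl(b-\hat p) - \hat q\big)$; any other natural ordering of the two vector subtractions only redistributes the $\eps_M$ factors and does not change the order of the final bound.

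For the individual pieces I would invoke the textbook estimates \cite[Ch.~3]{Higham02}: the matrix--vector product gives $\hat p = A\hat x + f_1$ with $|f_1| \le \gamma_n |A|\,|\hat x|$; the inner product gives $\hat\sigma = v^T\hat x + f_2$ with $|f_2| \le \gamma_n |v|^T|\hat x|$; the scaling gives $\hat q = \hat\sigma\, u + f_3$ with $|f_3| \le \eps_M |\hat\sigma|\,|u|$, and since $|\hat\sigma| \le (1+\gamma_n)|v|^T|\hat x|$ this combines with $f_2$ into $\hat q = (v^T\hat x)u + \tilde f_3$, $|\tilde f_3| \le (\gamma_n+\eps_M)\,|u||v|^T|\hat x| + \mathcal{O}(\eps_M^2)$; and the two subtractions give $fl(b-\hat p) = b - \hat p + f_4$ with $|f_4| \le \eps_M|b-\hat p|$ and $\hat r = fl(b-\hat p) - \hat q + f_5$ with $|f_5| \le \eps_M\,|fl(b-\hat p) - \hat q|$.

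Substituting these and cancelling the exact part $b - A\hat x - (v^T\hat x)u$, I get $r - \hat r = -(f_1 + \tilde f_3 + f_4 + f_5)$. Bounding with the triangle inequality and the crude magnitude estimates $|b-\hat p| \le |b| + |A||\hat x| + \mathcal{O}(\eps_M)$ and $|fl(b-\hat p) - \hat q| \le |b| + |A||\hat x| + |u||v|^T|\hat x| + \mathcal{O}(\eps_M)$ then yields
\[
|r-\hat r| \le (\gamma_n + 2\eps_M)\,|A||\hat x| + (\gamma_n + 2\eps_M)\,|u||v|^T|\hat x| + 2\eps_M\,|b| + \mathcal{O}(\eps_M^2),
\]
and since $\gamma_n + 2\eps_M \le \gamma_{n+2}$ and $2\eps_M \le \gamma_{n+2}$ this is precisely \eqnref{rHatBnd:eq}--\eqnref{t_rep:eq}.

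The substance here is purely bookkeeping: checking that each accumulated constant stays below $\gamma_{n+2}$ and that the cross terms $\gamma_n\eps_M$, $\gamma_n^2$, $\eps_M^2$, etc., are legitimately absorbed into $\mathcal{O}(\eps_M^2)$ (using $\gamma_a + \gamma_b \le \gamma_{a+b}$ and $\gamma_a\gamma_b = \mathcal{O}(\eps_M^2)$). I expect the only mild obstacle to be pinning down the exact evaluation order used to form $\hat r$ --- e.g.\ whether $b - A\hat x$ is accumulated entrywise inside the matvec loop or $A\hat x$ is formed first --- since this slightly changes which intermediate quantity the final roundings act on; but in all such orderings the two dominant $\gamma_n$ contributions come from the matvec and the inner product, with at most two further subtractions/scalings adding $\mathcal{O}(\eps_M)$, so the $\gamma_{n+2}$ bound is insensitive to the choice.
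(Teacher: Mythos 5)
Your proof is correct and follows essentially the same route as the paper: decompose the computation of $\hat r$ into the matrix--vector product, the inner product, the scaling by $u$, and the subtractions, apply the standard componentwise bounds from \cite[Ch.~3]{Higham02} to each, and absorb the accumulated constants into $\gamma_{n+2}$. The only cosmetic difference is that the paper treats $b-A\hat x$ as a single operation with bound $\gamma_{n+1}(|b|+|A||\hat x|)$ while you split it into a matvec plus a subtraction, which redistributes an $\eps_M$ but yields the identical final bound.
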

\begin{proof}
First, consider $p = b - A \hat x$ and $\hat p = fl(p)$. It is easy to see that $\hat p = p + \delta p$ where $|\delta p| \leq \gamma_{n+1} (|b| + |A| |\hat x|)$. Next, let $q = (v^T \hat x) u$ and $\hat q = fl(q) = (v^T \otimes \hat x) \otimes u$. We know that $\hat q = q + \delta q$ with (see \cite[p. 74]{Higham02} for instance)
\[
|\delta q| \leq (\gamma_{n}  + \eps_M (1 + \gamma_n)) |u| |v|^T |\hat x| \leq \gamma_{n+1}  |u| |v|^T |\hat x| + \mathcal{O}(\eps_M^2).
\]
Finally, we have $\hat r = \hat p \ominus \hat q$ which satisfies $\hat r = \hat p - \hat q + \check \delta_{-}$
with 
\[
|\check \delta_{-}| \leq \eps_M (|\hat p| + |\hat q|) \leq \eps_M (|p| + |\delta p|) + \eps_M (|q| + |\delta q|) \leq \eps_M (|b| + |A|\hat x|) + \eps_M  |u| |v|^T |\hat x| + \mathcal{O}(\eps_M^2).
\]
 We have 
\[
\hat r = (p + \delta p) - (q + \delta q) + \check \delta_{-} = (p - q) +  \delta p - \delta q + \check \delta_{-} = r + \delta r
\]
with $\delta r := \delta p - \delta q + \check \delta_{-}$ which yields
\begin{align*}
|r - \hat r| & = |\delta r |  \leq |\delta p| + |\delta q| + |\check \delta_{-}|\\
               & \leq (\gamma_{n+1} + \eps_M) (|b| + |A| |\hat x|) + (\gamma_{n+1} + \eps_M) |u| |v|^T |\hat x| + \mathcal{O}(\eps_M^2) \\
               & \leq \gamma_{n+2} (|b| + |A| |\hat x|) +\gamma_{n+2}  |u| |v|^T |\hat x| + \mathcal{O}(\eps_M^2).
\end{align*}
\end{proof}

We are now ready to invoke \cite[Thm. 12.3]{Higham02} to establish our main result demonstrating that each SM iteration improves the stability of SM. See Alg.\ref{SMIR:alg}. 
\begin{theorem}
Let $A$ and $B := A + u v^T$ be nonsingular $n \times n$ matrices. Suppose the linear system $B x = b$ is solved using SM in floating-point arithmetic following one step of iterative refinement as in Algorithm~\ref{SMIR:alg}, where $y$, $z$ and $y_r$ are computed with a backward stable algorithm. Assume that the computed SM solution $\hat x$ satisfies~\eqnref{SM_resBndCmp:eq}, and the computed residual satisfies~\eqnref{rHatBnd:eq}. Then, the corrected solution $\hat w$ satisfies
\begin{equation} \label{SMIR_resBndCmp:eq}
|b - A\hat w - (v^T \hat w) u| \leq \eps_M \ \Big( h(A,u,v, \hat r) + t(A,u,v, \hat w, b)+ \big( |A|+|u||v|^T \big) |\hat w|\Big) + \eps_M \ q,
\end{equation}
where $q = \mathcal{O}(\eps_M)$ if $t(A,u,v,\hat x, b) - t(A,u,v,\hat w, b) = \mathcal{O}(\|\hat x - \hat w\|_{\infty})$.
\end{theorem}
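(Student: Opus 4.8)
The plan is to apply Higham's iterative refinement analysis~\cite[Thm.~12.3]{Higham02} in the ``fixed-precision'' setting, using the solver SM (not a generic backward-stable solver) to compute the correction. The key structural observation is that Algorithm~\ref{SMIR:alg} is exactly one step of iterative refinement in which the inner solve of $B\,\delta x = \hat r$ is carried out by SM. Theorem~\ref{SM_resBndCmp:thm} shows precisely that SM, applied to a right-hand side, returns a solution whose residual has the componentwise form $\eps_M(g|\hat x| + h)$ required to invoke \cite[Thm.~12.3]{Higham02}, and the lemma giving~\eqnref{rHatBnd:eq} supplies the second ingredient, namely a componentwise bound $\eps_M\,t$ on the error in forming the computed residual $\hat r$. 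So the bulk of the proof is assembling these two facts in the form Higham's theorem expects.

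The steps I would carry out, in order. First, write $\hat r = r + \delta r$ with $|\delta r| \le \eps_M\,t(A,u,v,\hat x,b)$ from~\eqnref{rHatBnd:eq}, so that the SM inner solve is applied to a right-hand side that differs from the true residual by a controlled amount. Second, apply Theorem~\ref{SM_resBndCmp:thm} with $b$ replaced by $\hat r$: the SM-computed correction $\widehat{\delta x}$ satisfies $|\hat r - A\widehat{\delta x} - (v^T\widehat{\delta x})u| \le \eps_M(g(A,u,v)|\widehat{\delta x}| + h(A,u,v,\hat r))$; here one uses that $h$ depends on the right-hand side, which is why $h(A,u,v,\hat r)$ appears in~\eqnref{SMIR_resBndCmp:eq}. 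Third, form $\hat w = fl(\hat x + \widehat{\delta x})$ and account for the rounding in this final sum (a $\delta$-term bounded by $\eps_M|\hat w|$ plus higher order), and expand $b - A\hat w - (v^T\hat w)u = (b - A\hat x - (v^T\hat x)u) - (A\widehat{\delta x} + (v^T\widehat{\delta x})u) + (\text{sum rounding})$. Fourth, substitute the residual of $\hat x$ using that it equals $\hat r - \delta r$ (by definition of $\hat r$), so the leading $O(1)$ terms telescope against the SM-correction residual, leaving $\delta r$, the SM-correction residual $\eps_M(g|\widehat{\delta x}| + h(\cdot,\hat r))$, and the sum-rounding term. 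Fifth, bound the ``nuisance'' terms: replace $|\widehat{\delta x}|$ by $|\hat w - \hat x|$ up to $O(\eps_M)$, then note $\hat w - \hat x$ is small (it is the correction, $O(\eps_M)$ times solution-scale quantities), so $g|\widehat{\delta x}|$ is absorbable; and convert $t(A,u,v,\hat x,b)$ into $t(A,u,v,\hat w,b)$ at the cost of the term $q$, which is $O(\eps_M)$ precisely under the stated hypothesis $t(A,u,v,\hat x,b) - t(A,u,v,\hat w,b) = O(\|\hat x - \hat w\|_\infty)$ (this is why $q$ and that hypothesis appear in the statement). Finally, collect terms into the claimed form $\eps_M(h(A,u,v,\hat r) + t(A,u,v,\hat w,b) + (|A| + |u||v|^T)|\hat w|) + \eps_M q$.

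The main obstacle I anticipate is bookkeeping the right-hand-side dependence cleanly: $h$ carries $|A^{-1}u|$ and the factors $|\hat\alpha|/|\hat\beta|$, which are defined in terms of the \emph{original} system in Step~1 of Algorithm~\ref{SMIR:alg} and are \emph{reused} in the IR step (that reuse is the source of the efficiency claim), so one must be careful that applying Theorem~\ref{SM_resBndCmp:thm} to the right-hand side $\hat r$ does not secretly require recomputing $\hat z$ or $\beta$. The other delicate point is the telescoping in step four: it relies on $\hat r$ being \emph{defined} as the particular computed quantity $fl(b - A\hat x - (v^T\hat x)u)$, so that the residual of $\hat x$ is literally $\hat r - \delta r$ with the \emph{same} $\delta r$ appearing in~\eqnref{rHatBnd:eq}; getting the signs and the shared error term consistent is where a careless proof would go wrong. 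The conversion $t(\cdot,\hat x,b) \to t(\cdot,\hat w,b)$ is genuinely an assumption rather than a theorem — $t$ involves $|A||\hat x|$, and if $\hat x$ is wildly larger than $\hat w$ the difference need not be $O(\|\hat x - \hat w\|_\infty)$ in a useful sense — which is exactly why it is flagged in the statement via $q$, and I would not try to remove it.
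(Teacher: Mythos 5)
Your proposal is correct and follows essentially the same route as the paper: the paper's ``proof'' of this theorem is simply to invoke \cite[Thm.~12.3]{Higham02}, with Theorem~\ref{SM_resBndCmp:thm} supplying the componentwise residual bound $\eps_M(g|\hat x|+h)$ for the SM inner solve and the lemma giving~\eqnref{rHatBnd:eq} supplying the bound $\eps_M t$ on the computed residual, exactly the two ingredients you identify. Your additional unpacking of the telescoping argument and of the role of the hypothesis on $t(\cdot,\hat x,b)-t(\cdot,\hat w,b)$ (which produces the $q$ term) matches the internal mechanics of Higham's theorem and is consistent with the paper's subsequent discussion of $h(A,u,v,\hat r)$ with $\hat\alpha$ replaced by $\hat\alpha_r$.
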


Next we adapt Higham's explanation of \cite[Thm. 12.3]{Higham02} to our linear system $Bx=b$. Theorem~\ref{SMIR_resBndCmp:eq} indicates that, to first order, the componentwise relative backward error 
\[
\mbox{Berr}_{|B|,|b|}(\hat w) = \max_{i} \frac{|r|_i}{(|B| |\hat w| + |b|)_i}
\]
(see~\cite{Oettli64} and~\cite[Thm. 7.3]{Higham02})
will be small after one step of iterative refinement as long as $t(A,u,v,\hat w, b)$ and $h(A,u,v,\hat r)$ are bounded by a modest scalar multiple of $|B| |\hat w| + |b|$, the denominator of $\mbox{Berr}_{|B|,|b|}(\hat w)$. Following~\eqnref{t_rep:eq} we have
\[
t(A,u,v,\hat w, b) = \frac{\gamma_{n+2}}{\eps_M} \Big(  |b| + (|A|  + |u| |v|^T)|\hat w| \Big)
\] 
which is exactly what is required. In the case of $h$, note first that $h(A,u,v,\hat r)$ is obtained by replacing $b$ in \eqnref{h_formula:eq} with $\hat r$. This means that $\hat \alpha$ in \eqnref{h_formula:eq} is replaced with $\hat \alpha_r$, the quantity computed in Step 4 of Alg.~\ref{SMIR:alg}. We have therefore shown that

\begin{theorem} \label{oneIRenough:thm}
If 
\begin{equation} \label{h_r_formula:eq}
h(A,u,v, \hat r) = \Big( (2d n^2 e e^T + 3I) |A| + 2 (n+4) |u| |v|^T\Big) \frac{|\hat \alpha_r|}{|\hat \beta|}  |A^{-1} u| + 2 \frac{|\hat \alpha_r|}{|\hat \beta|} |u|
\end{equation}
is bounded by a modest scalar multiple of $|A+uv^T| |\hat w| + |b|$, then one step of SM-IR guarantees (componentwise relative) backward stability.
\end{theorem}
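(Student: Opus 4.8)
The plan is to assemble the pieces already in place: the residual bound \eqnref{SMIR_resBndCmp:eq} obtained in the preceding theorem via Higham's iterative-refinement result \cite[Thm.~12.3]{Higham02}, the explicit formula \eqnref{t_rep:eq} for $t$, and the Oettli--Prager characterization of componentwise backward error \cite{Oettli64}, \cite[Thm.~7.3]{Higham02}. Concretely, I would start from
\[
|b - A\hat w - (v^T\hat w)u| \leq \eps_M\big(h(A,u,v,\hat r) + t(A,u,v,\hat w,b) + (|A|+|u||v|^T)|\hat w|\big) + \eps_M q
\]
and argue that, under the theorem's hypothesis on $h$, the entire right-hand side is a modest scalar multiple of $\eps_M\big((|A|+|u||v|^T)|\hat w| + |b|\big)$ up to $\mathcal{O}(\eps_M^2)$; since $|A+uv^T|\leq |A|+|u||v|^T$ entrywise, this is the quantity that controls the componentwise relative backward error of $\hat w$ for $(A+uv^T)x=b$ measured against the SM data $A,u,v,b$.

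I would treat the three terms in turn. The middle term is handled by substituting \eqnref{t_rep:eq}: $\eps_M\, t(A,u,v,\hat w,b) = \gamma_{n+2}\big(|b| + (|A|+|u||v|^T)|\hat w|\big)$, which is already of the desired form with constant $\gamma_{n+2}/\eps_M = \mathcal{O}(n)$. The third term, $\eps_M(|A|+|u||v|^T)|\hat w|$, is trivially bounded by the target quantity. That leaves $\eps_M h(A,u,v,\hat r)$, and here the theorem's hypothesis is exactly what is needed: by the discussion preceding the statement, $h(A,u,v,\hat r)$ is given by \eqnref{h_r_formula:eq} (formula \eqnref{h_formula:eq} with $\hat\alpha$ replaced by the computed $\hat\alpha_r$ from Step~4 of Algorithm~\ref{SMIR:alg}), and we are assuming it is bounded by a modest multiple of $|A+uv^T||\hat w| + |b| \leq (|A|+|u||v|^T)|\hat w| + |b|$. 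Adding the three contributions gives $|b - A\hat w - (v^T\hat w)u| \leq c'\,\eps_M\big((|A|+|u||v|^T)|\hat w| + |b|\big) + \mathcal{O}(\eps_M^2)$ with $c'$ a modest scalar, after absorbing $\eps_M q = \mathcal{O}(\eps_M^2)$.

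The last step is to invoke Oettli--Prager \cite[Thm.~7.3]{Higham02}: a componentwise residual bound $|r| \leq \epsilon\,(E|\hat w| + f)$ with $E = |A|+|u||v|^T$ and $f = |b|$ guarantees perturbations $\Delta A, \Delta u, \Delta v, \Delta b$ (equivalently a $\Delta B$ with $|\Delta B|\leq \epsilon E$ and $|\Delta b|\leq \epsilon f$) for which $\hat w$ solves the perturbed system exactly, so $\mbox{Berr}_{E,f}(\hat w) = \mathcal{O}(\eps_M)$, i.e.\ componentwise relative backward stability. I expect this theorem to be essentially assembly rather than hard analysis; the two points that need care are (i) verifying that the side condition ensuring $q = \mathcal{O}(\eps_M)$ holds, which it does because $t$ is \emph{linear} in its last argument, so $t(A,u,v,\hat x,b) - t(A,u,v,\hat w,b) = \frac{\gamma_{n+2}}{\eps_M}(|A|+|u||v|^T)\big(|\hat x| - |\hat w|\big)$ is $\mathcal{O}(\|\hat x-\hat w\|_{\infty})$ with an $\eps_M$-independent constant; and (ii) being explicit that of the three terms in \eqnref{SMIR_resBndCmp:eq} only $h(A,u,v,\hat r)$ fails to be automatically of the required form, so that the single displayed hypothesis really is sufficient. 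A secondary caveat worth stating is that the backward perturbation produced acts on the separate data $(A,u,v,b)$; it corresponds to a perturbation of $A+uv^T$ bounded entrywise by $\mathcal{O}(\eps_M)(|A|+|u||v|^T)$, which coincides with $\mathcal{O}(\eps_M)|A+uv^T|$ precisely when forming $A+uv^T$ entails no severe cancellation.
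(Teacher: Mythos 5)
Your proposal is correct and follows essentially the same route as the paper: the paper's own ``proof'' is precisely the discussion preceding the theorem statement, which bounds the three terms on the right-hand side of \eqnref{SMIR_resBndCmp:eq} exactly as you do ($t$ is already of the required form, the $(|A|+|u||v|^T)|\hat w|$ term is trivial, and the hypothesis disposes of $h$) and then appeals to the Oettli--Prager characterization of the componentwise backward error. Your two added points of care --- checking the side condition on $q$ via the Lipschitz dependence of $t$ on $\hat x$, and flagging that the residual bound naturally involves $|A|+|u||v|^T$ rather than $|A+uv^T|$ so that cancellation in forming $B$ matters for the final interpretation --- are legitimate refinements that the paper glosses over.
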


This is a general theorem; to be able to say anything more specific we would require knowledge of the size of 
\[
\frac{|\hat \alpha_r|}{|\hat \beta|}  |A^{-1} u|
\]
 against $\hat w$. In trying gain some insight, let us consider the two cases of large- and small-norm solutions. 
 \begin{itemize}
 \item If the solution has a large norm $\mathcal{O}(\sigma_{\min}(A+uv^T)\|b\|)$, it is likely that SM is already backward stable in which case IR is not required but even if it is applied, assuming the SM-IR solution $\hat w$ has a similar norm to the SM solution $\hat x$ as well as the exact solution $x$, we can expect $\frac{|\hat \alpha_r|}{|\hat \beta|}  \|A^{-1} u\|$ to be bounded by $\|\hat w\|$ hence backward stability is maintained. 
  \item On the other hand, consider the small-norm scenario in which $A$ and $A+uv^T$ are ill-conditioned, $\|x\| = \mathcal{O}(1)$ (e.g., $\|A\| = \|b\| = \mathcal{O}(1)$), but $\|y\|$ and $\|z\|$ are $\mathcal{O}(\kappa(A)) \gg 1$. In a nutshell, Lemma~\ref{help_bnd:lem} says $\frac{\hat \alpha}{\hat \beta} = \mathcal{O}(1)$, as follows from \eqnref{alphaBetaBound:eq} and assumptions such as $\frac{\cos (\theta_{v,\hat y})}{\cos (\theta_{v,\hat z})} = \mathcal{O}(1)$, and that the SM residual $\hat r$ before any IR steps is proportional to $\eps_M \kappa(A)$; similarly, assuming Lemma~\ref{help_bnd:lem} applies when $\hat y_r$ is used instead of $\hat y$, we can argue that 
 \[
 \frac{|\hat \alpha_r|}{|\hat \beta|} \|\hat z \| \leq \check c \| \hat{y}_{\hat r}\| + \mathcal{O}{(\eps_M)} 
 \]
where $\hat{y}_{\hat r} = fl(y_{\hat r}) = fl(A^{-1} \hat r)$ whose norm could be bounded analogous with~\eqnref{temp_ineq2:eq} where $b$ is replaced with $\hat r$. Assuming $\hat w = \mathcal{O}(1)$, we have 
 \begin{align*}
\frac{|\hat \alpha_r|}{|\hat \beta|} \max_i \frac{|A^{-1} u|_i}{|\hat w|_i} &\approx 
 \frac{\|A^{-1} \hat r\|}{\|A^{-1} u\|} \|A^{-1}u\|= \|A^{-1} \hat r\| \leq \|A^{-1}\| \| \hat r\| \\ & = \mathcal{O}(\kappa(A))\ \mathcal{O}(\eps_M \kappa(A)) = \mathcal{O}(\eps_M \kappa^2(A)).
 \end{align*}
Roughly speaking, this implies that in the small-norm solution scenario, for SM to achieve backward stability with one step of fixed-precision IR, we would require $\kappa(A) < \eps_M^{-1/2}$, which corresponds to $6.7 \times 10^7$ in double precision. In such cases the original SM residual $r$ for the SM solution $\hat x$ to $Bx=b$ (i.e., before any IR steps) has a norm bounded by $\eps_M \kappa(A) \|b\| \approx  10^{-8} \|b\|$. With the first IR step, as discussed above, the deciding quantity is essentially $\|A^{-1} \hat r\|$ which corresponds to the solution norm of a linear system $Ax = \hat r $. This can be bounded by $\eps_M \kappa(A) \| \hat r \| \lessapprox \eps_M^2 \kappa^2(A) \|b\| \lessapprox \eps_M \|b\|$ thereby achieving backward stability.
\end{itemize}

\section{Numerical experiments}\label{sec:exp}
In our experiments we compare the following algorithms for solving \eqnref{eq:maingoal}:
\begin{enumerate}
\item MATLAB backslash applied directly to the sum $B:= A+uv^T$: This is GEPP (Gaussian elimination with partial pivoting) based on computing the LU decomposition with partial pivoting. 
\item SM-LU: Algorithm~\ref{SM:alg} where the $A$-solves ($Ay=b$ and $Az=u$) are done using GEPP. 
\item SM-QR: Algorithm~\ref{SM:alg} using the QR factorization of $A$ for $A$-solves. 
\item SM-LU-IR: This is SM-IR (Algorithm~\ref{SMIR:alg}) using GEPP for $A$-solves. 
\end{enumerate}
The complexity of the first algorithm is cubic $\mathcal{O}(n^3)$ in the matrix dimension, $A\in\mathbb{R}^{n\times n}$. Assuming the factors of an LU or QR decomposition of $A$ are already available and the number of IR steps is $\mathcal{O}(1)$, the complexity of the rest of the methods is quadratic $\mathcal{O}(n^2)$. 

While GEPP/LU is known to have adversarial examples where $Ax=b$ would not be solved in a stable fashion, such matrices are known to be extremely rare~\cite[Ch.~22]{TB:2022}, and empirically LU performs exceptionally well (QR, by contrast, is always backward stable). For this reason we do not present SM-QR-IR, as (expectedly) its empirical performance is almost identical to SM-LU-IR.

In our experiments, the normwise relative backward error of an approximate solution $\tilde x$ is computed using the bound in \eqnref{berr_formula:eq}. We take $\max_{i=1}^n \frac{ |r_i|}{\big( |B| |\tilde x| + |b| \big)_i}$ as the componentwise relative backward error. In SM-LU-IR we repeated IR until the relative backward error fell below $5\eps_M\approx 5\times 10^{-16}$.

Based on the insight obtained in Section~\ref{sec:source}, and because the empirical performance differ based on the parameters, we split the experiments in terms of the magnitude of $\kappa(A),\kappa(A+uv^T)$, and whether or not the norm of the solution $x$ is small $\mathcal{O}(1)$ or large $\mathcal{O}(1/\sigma_{\min}(A+uv^T)) = \mathcal{O}(\kappa(A+uv^T))$. 
Throughout this section, $\kappa$ denotes the 2-norm condition number $\kappa(A)=\|A\|_2\|A^{-1}\|_2$. All reported backward and forward errors are relative. We consider four cases based on whether or not $\kappa(A),\kappa(A+uv^T)\gg 1$, and when $\kappa(A+uv^T)\gg 1$, we further split into two cases: (i) the solution $x$ has small norm $\|x\|=O(1)$, and (ii) large norm $\|x\|=O(\kappa(A+uv^T))$.

\subsection{Case 1(i): both $\mathbf{\kappa(A), \kappa(A+uv^T) \gg 1}$, small-norm solution}

\begin{example} \label{ill_ill_smallNorm_sparse:ex} \normalfont
We generate sparse random matrices $A$ of size $n = 8000$ using the \texttt{sprandn} routine in MATLAB with condition numbers of $10^6, 10^8, 10^{10}$ and $10^{12}$. The density of $A$ is set to $0.0001$ resulting in a matrix with about $8000$ nonzero entries. In this example we had $\kappa(A) = 10^6$. The vectors $u$ and $v$ are generated with \texttt{randn}. In all four matrices, the condition number of $B=A+uv^T$ was about $10^{10}$; all of these are reported on the horizontal axes in Figure~\ref{fig:ill_moderate_smallNorm_sparse}. We take the exact solution to also be a \texttt{randn} vector for which $\|x\| \approx 90$ and then form $b:= Bx$. 

\begin{figure}[!t]
\center
\includegraphics[width=0.7\textwidth]{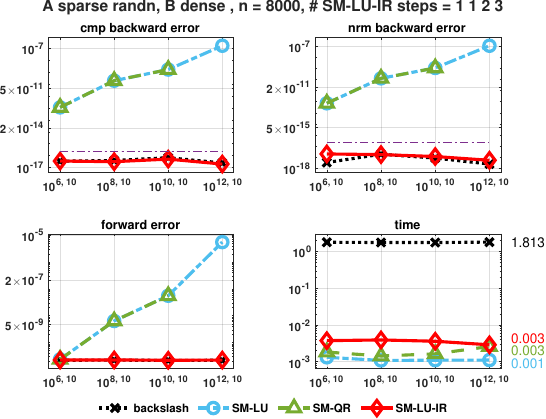}
\caption{Results for Example~\ref{ill_ill_smallNorm_sparse:ex} with sparse $A$. Case 1(i): $\kappa(A), \kappa(A+uv^T) \gg 1$, small-norm solution. 
}
    \label{fig:ill_moderate_smallNorm_sparse}
 \end{figure}

\begin{figure}[!h]
\center
\includegraphics[width=0.7\textwidth]{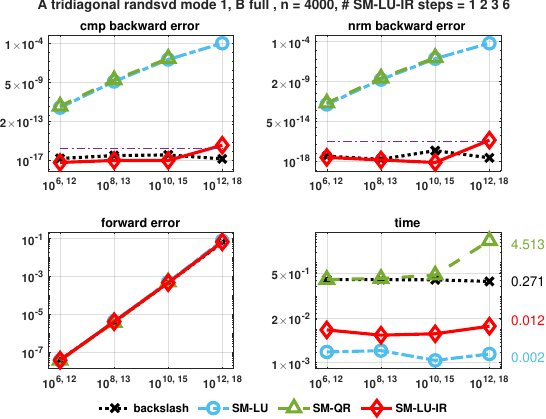}
\caption{Results for Example~\ref{ill_ill_smallNorm:ex}. Case 1(i): $\kappa(A), \kappa(A+uv^T) \gg 1$, small-norm solution. }
    \label{fig:ill_ill_smallNorm_tridiag}
 \end{figure}

As the condition number of $A$ grows, the number of IR steps increases from $1$ to $3$ for the most ill-conditioned matrix. 
The number of steps are also reported in the title of Figure~\ref{fig:ill_moderate_smallNorm_sparse}. To be clear, for the first test where $\kappa(A) = 10^6$ and $\kappa(B) = 5.1 \times 10^{9} \approx 10^{10}$, \texttt{SM-LU-IR} needs one IR step, and in the last test which is the most ill-conditioned example, it requires three IR steps. Note that \texttt{SM-LU-IR} exhibits backward stability while the additional IR iterations required for the more ill-conditioned tests incur no significant time penalty. Also, as $A$ is sparse, all variants of SM can take advantage of structure to speed up their computation. $B$ is not sparse, which is why backslash applied to $B$ is the slowest method in all four tests. 

We also note that \texttt{SM-QR} does not perform well for the most ill-conditioned example for which the backward and forward errors are NaN. This is due to infinity entries in $y$ and $z$ computed with the QR factors when applied to the sparse matrix $A$. The loss of accuracy could be mitigated at the cost of slowing down \texttt{SM-QR} if $A$ is first converted to a full matrix, thereby neglecting its sparsity. Even then, as with the previous three tests, \texttt{SM-QR} without IR would still fail to achieve backward stability.
\end{example}

\begin{example} \label{ill_ill_smallNorm:ex} \normalfont
We generate $A$ of size $n = 4000$ using \texttt{randsvd} matrices of mode 1 in the MATLAB gallery with condition numbers of $10^6, 10^8, 10^{10}$ and $10^{11}$. We also generate $u$ and $v$ as \texttt{randn} vectors. The condition number of matrices $B=A+uv^T$ are $10^{11}, 10^{13}, 10^{15}$ and $10^{18}$, respectively; all of these are reported on the horizontal axes in Figure~\ref{fig:ill_ill_smallNorm_tridiag}. We take the exact solution to also be a \texttt{randn} vector for which $\|x\| \approx 30$ and then form $b:= Bx$.

The backward error of SM-LU-IR is comparable with backslash applied to $B$, but it is faster even in the last case where 6 IR steps are required.
\end{example}

\subsection{Case 1(ii): both $\mathbf{\kappa(A), \kappa(A+uv^T) \gg 1}$, large-norm solution}

\begin{example} \label{ill_ill_largeNorm:ex} \normalfont
SM performs stably here. See Figure~\ref{fig:ill_ill_largeNorm}. Here we set $A$ to be tridiagonal as it is easier to control $\kappa(B)$. In addition, $b$ is set to be a \texttt{randn} vector and the exact solution is considered the one obtained with backslash.

 \begin{figure}[!b]
\center
\includegraphics[width=0.7\textwidth]{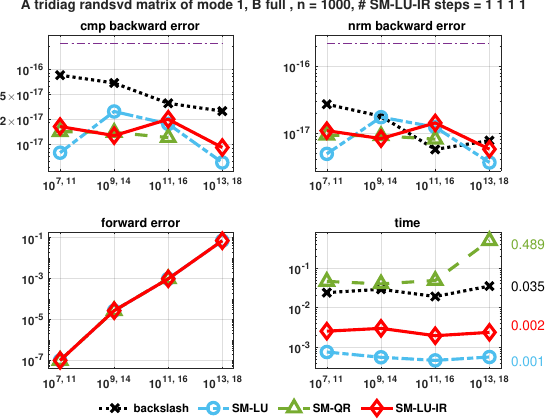}
\caption{Results for Example~\ref{ill_ill_largeNorm:ex}. Case 1(ii): $\kappa(A), \kappa(A+uv^T) \gg 1$, large-norm solution.}
    \label{fig:ill_ill_largeNorm}
 \end{figure}
 
\end{example}

\subsection{Case 2(i): $\mathbf{\kappa(A) = \mathcal{O}(1), \kappa(A+uv^T) \gg 1}$, small-norm solution}

\begin{example} \label{well_ill_smallNorm:ex} \normalfont
  We generate $A$, tridiagonal of size $n = 1000$ using \texttt{randsvd} matrices of mode 5 in the MATLAB gallery with condition numbers of $10^1, 10^2, 10^{3}$ and $10^{4}$. We also generate $u$ and $v$ as \texttt{randn} vectors. The condition number of matrices $B=A+uv^T$ are $10^{5}, 10^{6}, 10^{7}$ and $10^{8}$, respectively. We take the exact solution to also be a \texttt{randn} vector for which $\|x\| \approx 30$ and then form $b:= Bx$. See Figure~\ref{fig:well_ill_smallNorm} suggesting instability of SM at least to some extent. 

\begin{figure}
\center
\includegraphics[width=0.7\textwidth]{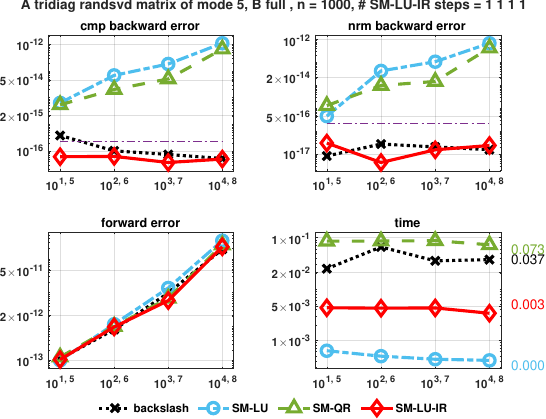}
\caption{Results for Example~\ref{well_ill_smallNorm:ex}. Case 2(i): $\kappa(A) = \mathcal{O}(1), \kappa(A+uv^T) \gg 1$, small-norm solution}
    \label{fig:well_ill_smallNorm}
 \end{figure}
 
\end{example}

\subsection{Case 2(ii): $\mathbf{\kappa(A) = \mathcal{O}(1), \kappa(A+uv^T) \gg 1}$, large-norm solution}

\begin{example} \label{well_ill_largeNorm:ex} \normalfont
In this problem where solution has a large-norm, again SM performs stably. See Figure~\ref{fig:well_ill_largeNorm}.
\begin{figure}
\center
\includegraphics[width=0.7\textwidth]{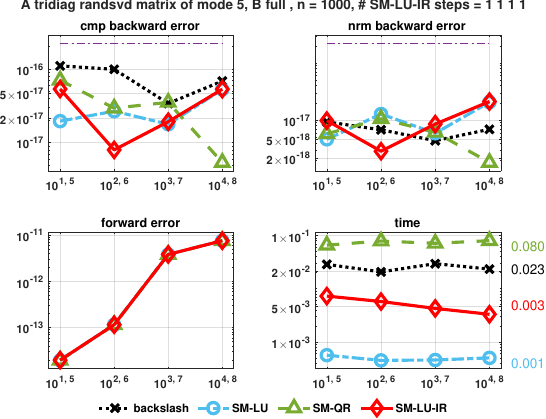}
\caption{Results for Example~\ref{well_ill_largeNorm:ex}. Case 2(ii): $\kappa(A) = \mathcal{O}(1), \kappa(A+uv^T) \gg 1$, large-norm solution.}
    \label{fig:well_ill_largeNorm}
 \end{figure}
 
\end{example}

\subsection{Case 3: $\mathbf{\kappa(A) \gg 1, \kappa(A+uv^T)  = \mathcal{O}(1)}$, small-norm solution}

\begin{example} \label{ill_well_smallNorm:ex} \normalfont
We generate $A$, pentadiagonal of size $n = 1000$ using \texttt{randsvd} matrices of mode 2 in the MATLAB gallery with condition numbers of $\kappa = 10^7, 10^9, 10^{11}$ and $10^{13}$. Matrices $A$ of this type have one small singular value that is equal to $\kappa^{-1}$ and all the remaining $n-1$ singular values are equal to 1. In order to make $B$ well-conditioned, we take $u$ to be a random multiple of the right singular vector of $A$ corresponding to its smallest singular value and analogously for $v$. Therefore the smallest singular value of $A$ is replaced with one of the same order as the rest of its singular values drastically improving the condition number of $B = A+ uv^T$ to $\mathcal{O}(1)$. 

We take $x$ a random vector and set $b = Bx$. Therefore, both $x$ and $b$ have a small norm. See Figure~\ref{fig:ill_well_smallNorm}. Interestingly, SM is unstable once again; reflecting the discussion in Section~\ref{sec:source}. Perhaps even more surprising is that not only SM-IR behaves in a backward stable manner, even the forward error is $\mathcal{O}(\eps_M)$ although this is a fixed-precision IR! Given that the plain SM is not backward stable, it has no chance of getting close to SM-IR in terms of forward stability, even though $B$ is well-conditioned. In such cases, if SM is required (for speed), it is strongly advisable to use IR.

\begin{figure}[!h]
\center
\includegraphics[width=0.7\textwidth]{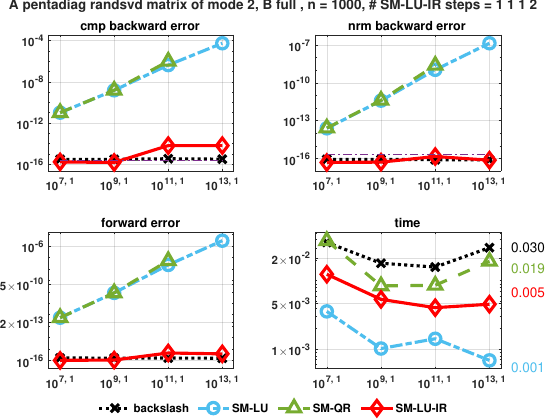}
\caption{Results for Example~\ref{ill_well_smallNorm:ex}. Case 3(i): $\mathbf{\kappa(A) \gg 1, \kappa(A+uv^T)  = \mathcal{O}(1)}$, small-norm solution}
    \label{fig:ill_well_smallNorm}
 \end{figure}
\end{example}

\begin{figure}[!h]
\center
\includegraphics[width=0.7\textwidth]{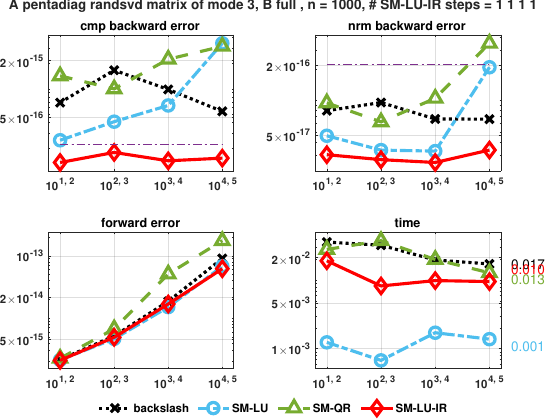}
\caption{Results for Example~\ref{well_well_smallNorm:ex}. Case 4: both $\mathbf{\kappa(A), \kappa(A+uv^T)  = \mathcal{O}(1)}$, small-norm solution}
    \label{fig:well_well_smallNorm}
 \end{figure}

\subsection{Case 4: both $\mathbf{\kappa(A), \kappa(A+uv^T)  = \mathcal{O}(1)}$, small-norm solution}

\begin{example} \label{well_well_smallNorm:ex} \normalfont
We generate pentadiagonal $A$ of size $n = 1000$ using \texttt{randsvd} matrices of default mode 3 in the MATLAB gallery with condition numbers of $10^1, 10^2, 10^{3}$ and $10^{4}$. 

We take $u$ and $v$ as random vectors, but also normalized to ensure that $\kappa(A+uv^T)$ does not deviate significantly from the scenario under consideration. The condition number of matrices $A$ and $B=A+uv^T$ can be seen on the horizontal axes of Figure~\ref{fig:well_well_smallNorm}. We take the exact solution to also be a \texttt{randn} vector for which $\|x\| \approx 30$ and then form $b:= Bx$ with $\|b\| \approx 10$. This is a situation where SM should be stable as discussed in the introduction; and the backward errors are all small. It is still worth noting that the backward error is observed to be improved by IR to $\mathcal{O}(\eps_M)$ from $\mathcal{O}(\kappa(A)\eps_M)$.
\end{example}

As in this case both $A$ and $A+uv^T$ are well-conditioned, with a random $b$ the solution cannot have a large norm. So, we skip the other sub-case again.

\subsection{Effect of IR}
Above we have shown the results of SM-IR after the final IR step. It is interesting to see how covergence is taking place as the IR steps proceed. We present these in the figures below, for Case 1(i) where we required the most number of IR iterations. We observe (as did in all cases with varying speed of improvement by IR) that both the residual and backward error converge steadily, until the backward error reaches $O(\eps_M)$.

\begin{figure}[!h]
\center
\includegraphics[width=0.4\textwidth]{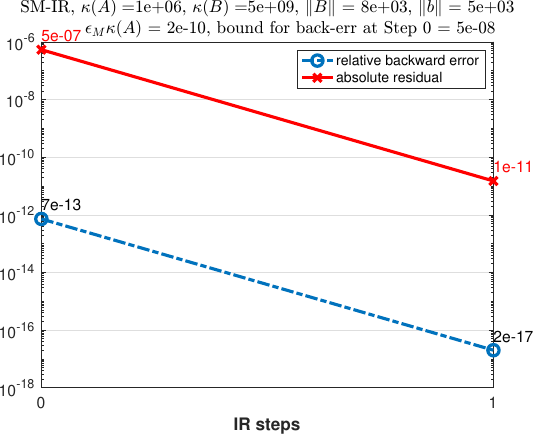}
\includegraphics[width=0.4\textwidth]{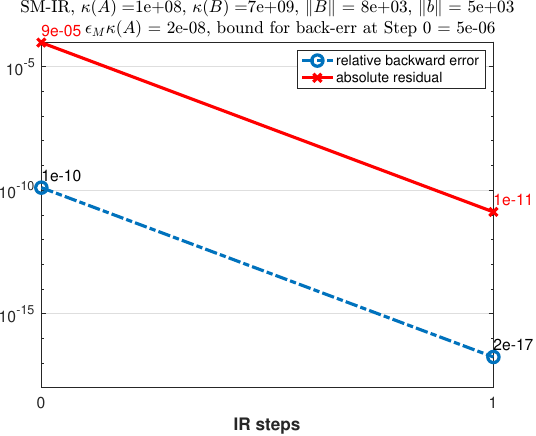}
\includegraphics[width=0.4\textwidth]{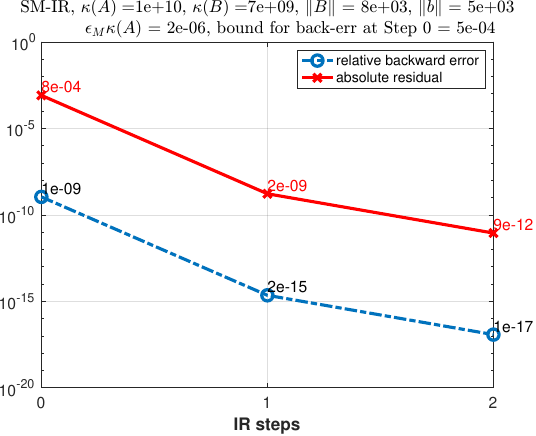}
\includegraphics[width=0.4\textwidth]{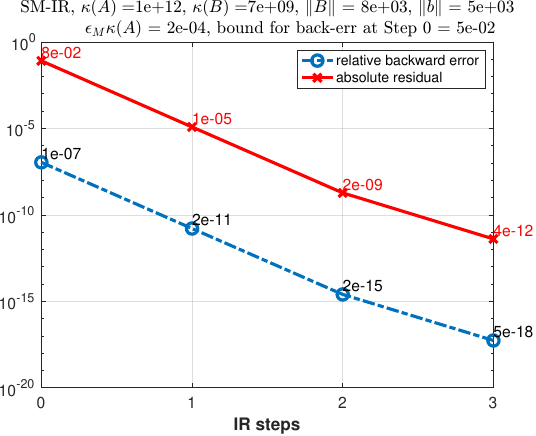}
\caption{Results for Example~\ref{ill_ill_smallNorm_sparse:ex}. $A$ sparse, $n = 8000$. Case 1(i): $\kappa(A), \kappa(A+uv^T) \gg 1$, small-norm solution}
    \label{fig:IR_ill_moderate_smallNorm_sparse}
 \end{figure}
\begin{figure}[!h]
\center
\includegraphics[width=0.4\textwidth]{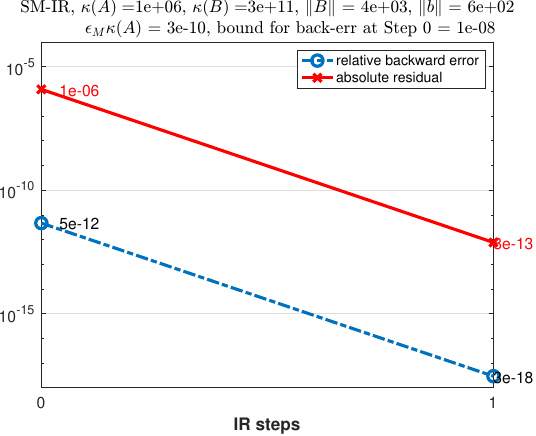}
\includegraphics[width=0.4\textwidth]{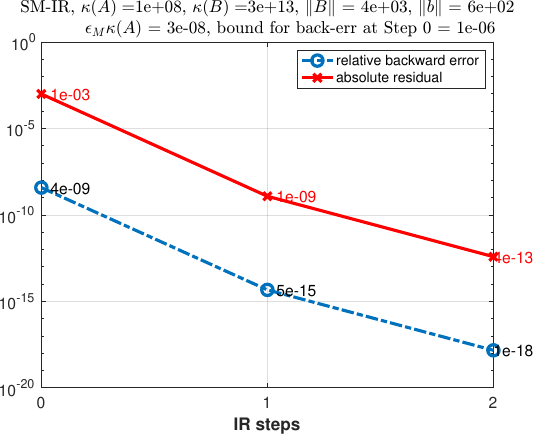}
\includegraphics[width=0.4\textwidth]{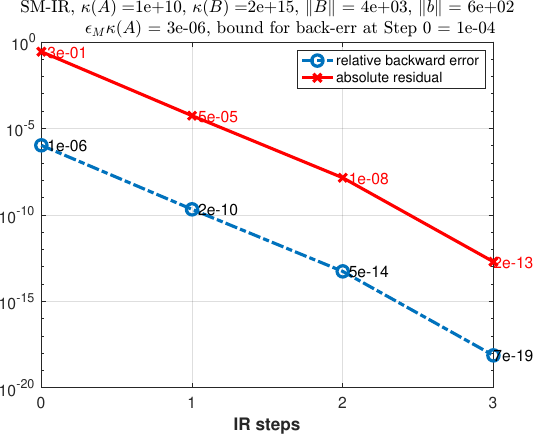}
\includegraphics[width=0.4\textwidth]{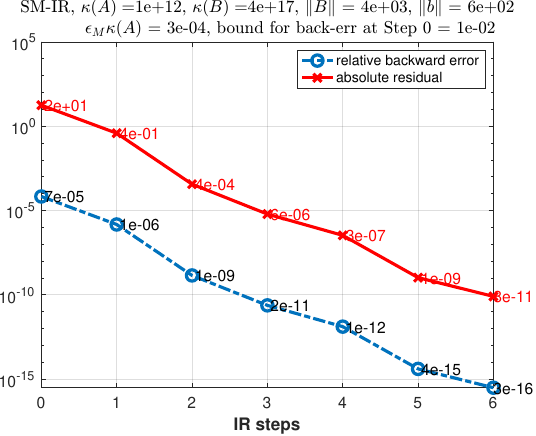}
\caption{Results for Example~\ref{ill_ill_smallNorm:ex}, $A$ tridiagonal, $n = 4000$. Case 1(i): $\kappa(A), \kappa(A+uv^T) \gg 1$, small-norm solution.}
    \label{fig:IR_ill_ill_smallNorm}
 \end{figure}

\paragraph{Summary of experiments}
In all our experiments, SM(-LU)-IR was able to compute a backward stable solution after $\mathcal{O}(1)$ IR steps, without sacrificing speed. Based on these promising empirical results, we conjecture that SM-IR is backward stable when applied to numerically nonsingular linear systems. As noted in the introduction, our theory requires the assumptions to hold, so is not sufficient to prove this in full. A full proof is left as an open problem. 

\subsection*{Acknowledgments}
We are grateful to David Bindel for pointing out the connection to bordered linear systems and for bringing~\cite{Govaerts91} to our attention. We also thank Alex Townsend for highlighting the relevance of structured-plus-low-rank linear systems in ultraspherical spectral methods.

\bibliography{manuscript} 

\newpage
\appendix

\section{Govaerts' work}
The starting point is to introduce the scalar variable $\zeta = v^T x$, which serves as a convenient intermediate step in the computation. We can now regard~\eqnref{eq:maingoal} as equivalent to the bordered linear system 
\begin{equation}
\label{eq:bordered}
\begin{bmatrix}
A & u \\
v^T & -1
\end{bmatrix}
\begin{bmatrix}
x \\
\zeta
\end{bmatrix} =
\begin{bmatrix}
b \\
0
\end{bmatrix}.
\end{equation}
Govaerts~\cite{Govaerts91} proposed algorithms for solving general $(n+1) \times (n+1)$ bordered linear systems, where the $(1,1)$ block is a matrix $A$ for which a black-box solver is assumed to be available. He discussed variants of block Gaussian elimination, both without and with IR. For instance, the BEC (block elimination Crout) algorithm of~\cite{Govaerts91}, when applied to~\eqnref{eq:bordered}, employs the following LU decomposition
\[
\begin{bmatrix}
A & u \\
v^T & -1
\end{bmatrix}
=
\begin{bmatrix}
A & 0 \\
v^T & -1-v^T A^{-1}u
\end{bmatrix}
\begin{bmatrix}
I_n & A^{-1} u \\
0 & 1
\end{bmatrix}.
\]
Backward and forward substitutions then give the SM formula. The following mixed forward-backward error bound is how Govaerts's analysis of BEC~\cite[Prop. 3.3]{Govaerts91} adapts to~\eqnref{eq:bordered}.
\begin{proposition}
Let $S$ be a normwise backward stable solver for $A$ with stability constant $C_S$\footnote{i.e., when $S$ is applied for solving $A y = b$ in floating-point arithmetic, then there exist a modest constant $C_S$, a matrix $\Delta A$ and a vector $\delta b$ such that $(A + \Delta A) \hat y = b + \delta b$ with $\| \Delta A\|_2 \leq C_S \eps_M \|A\|_2$ and $\| \delta b\|_2 \leq C_S \eps_M \|b\|_2$.} Then, $\hat x, \hat \zeta$ satisfy
\begin{equation}
\label{eq:pert_bordered}
\begin{bmatrix}
A+ \Delta A & u + \delta u\\
(v + \delta v)^T & -1+ \delta
\end{bmatrix}
\begin{bmatrix}
\hat x\\
\hat \zeta
\end{bmatrix} =
\begin{bmatrix}
b + \delta b\\
0
\end{bmatrix}
+ 
\begin{bmatrix}
T\\
q^T
\end{bmatrix} \hat y
\end{equation}
where 
\begin{align*}
 &
\| \Delta A\|  \leq (2+C_S) \eps_M \|A\| + \mathcal{O}(\eps_M^2),\\
 &\| \delta u\| \leq C_S \eps_M \|u\|,\\
 & 
\| \delta v\|  \leq (5+C_{IP}) \eps_M \|v\| + \mathcal{O}(\eps_M^2),\\
 &|\delta|  \leq 3 \eps_M \exp(3\eps_M),\\
 &\| \delta b\|  \leq C_{S} \eps_M  \|b\|,\\
 & 
\| T\| \leq (2C_S + 1)\ \eps_M \|A\| + \mathcal{O}(\eps_M^2),\\
 & 
\| q\|  \leq (4 + 2C_{IP}) \eps_M \|v\| + \mathcal{O}(\eps_M^2),
\end{align*}
in which $C_{IP} \leq \frac{n}{1-n \eps_M}$ is the constant appearing in the backward error for the inner product of two vectors. 
\end{proposition}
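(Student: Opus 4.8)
The plan is to adapt Govaerts' analysis of the BEC algorithm~\cite[Prop.~3.3]{Govaerts91} to the specific bordered system~\eqnref{eq:bordered}, tracking how the general-purpose stability constants specialize when the $(2,2)$ block is the scalar $-1$, the $(1,2)$ block is the vector $u$, and the $(2,1)$ block is $v^T$. The BEC factorization displayed above reduces the solve to: (i) one application of the black-box solver $S$ to compute $\hat z \approx A^{-1}u$, with $(A+\Delta_2)\hat z = u + \delta u$ where $\|\Delta_2\|\le C_S\eps_M\|A\|$, $\|\delta u\|\le C_S\eps_M\|u\|$; (ii) formation of the Schur complement $-1 - v^T\hat z$, which is an inner product plus a subtraction, incurring the inner-product backward error $C_{IP}$ and a handful of additional $\eps_M$ terms; (iii) a second application of $S$ to compute $\hat y \approx A^{-1}b$; and (iv) the two substitution steps that assemble $\hat x$ and $\hat\zeta$. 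First I would write out each of these four stages with its floating-point model, exactly as in the table preceding Theorem~\ref{SM_resBndCmp:thm}, and collect the perturbations.

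The second step is bookkeeping: I would group the accumulated errors so that perturbations of $A$ go into $\Delta A$, perturbations of $u$ into $\delta u$, perturbations of $v$ into $\delta v$, perturbations of the $(2,2)$ entry into $\delta$, and perturbations of the right-hand side $b$ into $\delta b$; the residual terms that cannot be absorbed into a structured perturbation of the coefficient matrix are collected into the rank-structured remainder $\begin{bmatrix}T\\ q^T\end{bmatrix}\hat y$, which is Govaerts' device for handling the fact that $\hat z$ (not the exact $A^{-1}u$) appears in the computed Schur complement. The key point is that each residual term is a product of an $\mathcal{O}(\eps_M)$ quantity with either $\|A\|$ or $\|v\|$, times $\hat y$ or $\hat z$; since $\hat z$ itself is $\approx A^{-1}u$ it gets folded into $\delta u$ and $T$, leaving $T$ proportional to $\eps_M\|A\|$ and $q$ proportional to $\eps_M\|v\|$. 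I would then simply read off the constants $2+C_S$, $C_S$, $5+C_{IP}$, $3$, $C_S$, $2C_S+1$, $4+2C_{IP}$ by counting how many rounding steps each perturbation passes through — two $A$-solves plus an assembly contribute $2+C_S$ to $\Delta A$, the inner product and the $1\oplus(\cdot)$ and the substitution contribute $5+C_{IP}$ to $\delta v$, and so on. The $|\delta|\le 3\eps_M\exp(3\eps_M)$ bound comes from tracking the $(1-\eps_M)^{-k}$ denominators as in the $\tilde\gamma$ notation.

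I expect the main obstacle to be the assignment of errors to the \emph{structured} perturbation blocks rather than to the unstructured remainder: in the general bordered setting Govaerts must decide, for each rounding error in the Schur-complement formation and in the substitutions, whether it can be represented as a perturbation of a genuine entry of the $(n+1)\times(n+1)$ matrix or whether it must be relegated to $T$ or $q$. Getting these allocations right — so that $\Delta A$, $\delta u$, $\delta v$, $\delta$ stay as small as claimed while $T,q$ remain $\mathcal{O}(\eps_M)$ in the appropriate norm — is the delicate part, and it is exactly where the special structure (scalar $-1$, rank-one border) must be exploited to match Govaerts' constants. The rest is routine triangle-inequality estimation, pushing all $\mathcal{O}(\eps_M^2)$ cross terms into the stated error terms. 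Once the allocation is fixed, verifying~\eqnref{eq:pert_bordered} is a direct substitution check.
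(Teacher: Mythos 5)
Your overall plan is the right one, and in fact it is the only route available: the paper itself gives no proof of this proposition, stating it as the specialization of Govaerts' Proposition 3.3 on the BEC algorithm to the bordered system \eqnref{eq:bordered}, so any self-contained argument must redo that analysis for the four stages you list (solve $Az=u$, form the Schur complement $-1-v^T\hat z$, solve $Ay=b$, then the two substitutions). Your identification of the crux — deciding which rounding errors can be cast as structured perturbations of the entries of the $(n+1)\times(n+1)$ matrix and which must be relegated to the residual $\bigl[\,T;\,q^T\,\bigr]\hat y$ — is also correct.

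However, the proposal stops exactly at that crux, and the one mechanism you do offer for it is not the right one. You say that terms involving $\hat z$ ``get folded into $\delta u$ and $T$'' because $\hat z\approx A^{-1}u$; but $\delta u$ in the statement is bounded by $C_S\eps_M\|u\|$ with no additional terms, i.e.\ it comes \emph{solely} from the backward error of the black-box solve $(A+\Delta_2)\hat z=u+\delta u$, and nothing else may be folded into it. The device that actually makes the bookkeeping close is the back-substitution identity $\hat\zeta\,\hat z=\hat y-\hat x+\mathcal{O}(\eps_M)$ (from $\hat x=fl(\hat y-\hat\zeta\hat z)$). Writing, e.g., $\hat\zeta\,\Delta_2\hat z=\Delta_2\hat y-\Delta_2\hat x+\mathcal{O}(\eps_M^2)$ splits each $\hat z$-proportional error into a piece acting on $\hat x$ (absorbed into $\Delta A$, contributing the $C_S$ in $2+C_S$) and a piece proportional to $\hat y$ (absorbed into $T$, contributing one of the two $C_S$'s in $2C_S+1$); the analogous manipulation in the second block row, $\hat\zeta\,v^T\hat z=v^T\hat y-v^T\hat x+\mathcal{O}(\eps_M)$, is what turns the inner-product errors in $v^T\hat z$ and $v^T\hat y$ into the $\delta v$ and $q$ perturbations with the constants $5+C_{IP}$ and $4+2C_{IP}$. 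Without this identity you cannot explain why $T$ and $q$ multiply $\hat y$ specifically, why $\delta u$ carries no $C_{IP}$ or assembly terms, or where the particular integer constants come from; ``counting how many rounding steps each perturbation passes through'' does not by itself determine the allocation. So the proposal is a correct outline of the strategy but leaves the decisive step of the proof unexecuted, and the hint it gives for that step would not produce the stated bounds.
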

\ \\
Govaerts~\cite{Govaerts91} also discusses two other algorithms (BED and BEM) which, when applied to~\eqnref{eq:bordered} are not equivalent to the SM formula as it is commonly implemented, for example in Alg~\ref{SM:alg} or in \cite[p. 487]{Higham02}. Directions for future work include investigating the potential implications of these alternative algorithms for the SM(W) framework.

\section{Related backward error bounds} \label{backward_err_25:sec}
Although this paper is {\em not} concerned with the stability of the SMW formula for matrix inversion, in this subsection, we discuss how the backward error bounds derived by Ma, Boutsikas, Ghadiri and Drineas \cite{Ma25} for a computed inverse of $B$ using SMW relate to Higham's question. Although it is common knowledge that multiplying by an explicit inverse is not an advisable way of solving a linear system in practice, one might argue that, at-least in theory, SMW could first be applied to invert $A+UV^T$, followed by a multiplication with $b$. Could the existing error bounds for matrix inversion via SMW then lead to alternative solutions to Higham's question? Here, we present an attempt in this direction that yields a negative answer. We first outline backward error bounds derived in \cite{Ma25} for the low-rank perturbed matrix inversion and then examine their implications when combined with ideal forward error bounds.
\begin{proposition} \label{backward_thm_25}
\normalfont 
\cite[Thm. 6]{Ma25} 
Let $\lambda := \|U\|_2 \|V\|_2$ 
and 
\begin{align}
\epsilon_1 &:= \|\tilde{A}^{-1} - A^{-1} \|_2, \label{eqn:assumption1} \\
\epsilon_2 & := \|Z^{-1} - (I + V^T \tilde{A}^{-1} U)^{-1} \|_2 \label{eqn:assumption2}
\end{align}
where $Z$ is a computed approximate inverse of $I + V^T \tilde{A}^{-1} U$. If 
 \begin{align}
      & \| I + V^T A^{-1} U\|_2 \leq \beta, \nonumber\\ 
      & \widetilde{A}\ \text{and}\ (Z^{-1})^{-1} - V^T \widetilde{A}^{-1} U\ \text{are invertible},\nonumber\\
      & \epsilon_1 < \frac{1}{2 \|A\|_2},  \label{backward_thm_assumption1}\\
      & \epsilon_2 < \frac{1}{2 \left( \beta+ \lambda \, \epsilon_1  \right)},  \nonumber \\ 
      &  2\left( \beta+ \lambda \, \epsilon_1  \right)^2 \epsilon_2 < \frac{1}{2}. \nonumber
\end{align}
Then
    \begin{align}
    \| B -\left(\widetilde{A}^{-1} - \widetilde{A}^{-1} U Z^{-1} V^T \widetilde{A}^{-1}\right)^{-1}\|_2 
    &\leq 2 \epsilon_1 \|A\|_2^2 + 4 \lambda\epsilon_2 \left(\beta+ \lambda \, \epsilon_1  \right)^2. \label{backward_fullbound}
\end{align}
\end{proposition}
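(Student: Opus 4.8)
The plan is to prove that, despite the two sources of error quantified by $\epsilon_1$ and $\epsilon_2$, the computed SMW inverse
\[
\widetilde{B^{-1}} := \widetilde{A}^{-1} - \widetilde{A}^{-1} U Z^{-1} V^T \widetilde{A}^{-1}
\]
is still the \emph{exact} inverse of a clean low-rank update of $\widetilde{A} := (\widetilde{A}^{-1})^{-1}$, and then merely to compare that exact-arithmetic object with $B = A + UV^T$. Throughout I would write $\widetilde{A}^{-1} = A^{-1} + E_1$ with $\|E_1\|_2 = \epsilon_1$, put $W := I + V^T \widetilde{A}^{-1} U$ and write $Z^{-1} = W^{-1} + E_2$ with $\|E_2\|_2 = \epsilon_2$, and set $D := Z - W$.

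First I would handle the perturbation in $A$. From $\widetilde{A}^{-1} = A^{-1}(I + A E_1)$ with $\|A E_1\|_2 \le \|A\|_2 \epsilon_1 < \tfrac12$ (assumption~\eqnref{backward_thm_assumption1}), a Neumann-series estimate gives $\|\widetilde{A}\|_2 \le 2\|A\|_2$ and $\widetilde{A} - A = -(I + A E_1)^{-1} A E_1 A$, hence $\|\widetilde{A} - A\|_2 \le 2\epsilon_1 \|A\|_2^2$, the first term of~\eqnref{backward_fullbound}. Next, because $W = I + V^T A^{-1} U + V^T E_1 U$ and $\|I + V^T A^{-1} U\|_2 \le \beta$, I get $\|W\|_2 \le \beta + \lambda \epsilon_1$. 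Writing $Z^{-1} = W^{-1}(I + W E_2)$ and using $\epsilon_2 < \tfrac1{2(\beta+\lambda\epsilon_1)}$ to bound $\|W E_2\|_2 < \tfrac12$, the same argument yields $\|Z\|_2 \le 2(\beta+\lambda\epsilon_1)$ and $D = -(I + W E_2)^{-1} W E_2 W$, so $\|D\|_2 \le 2(\beta+\lambda\epsilon_1)^2 \epsilon_2$, which the last hypothesis forces to lie below $\tfrac12$.

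The crux is an exact identity. I would factor $\widetilde{B^{-1}} = \widetilde{A}^{-1}\bigl(I - U Z^{-1} V^T \widetilde{A}^{-1}\bigr)$ and invert, applying the Woodbury identity $(I + PQ)^{-1} = I - P(I+QP)^{-1}Q$ to the bracket with $P = -U Z^{-1}$, $Q = V^T \widetilde{A}^{-1}$. Since $V^T \widetilde{A}^{-1} U = W - I$, the inner factor simplifies to $I + QP = (I + D)Z^{-1}$, and because $I + D = Z - V^T\widetilde{A}^{-1}U$ is invertible by hypothesis --- indeed $\|(I+D)^{-1}\|_2 < 2$ since $\|D\|_2 < \tfrac12$ --- everything telescopes to
\[
\bigl(\widetilde{B^{-1}}\bigr)^{-1} = \widetilde{A} + U (I + D)^{-1} V^T .
\]
Subtracting $B = A + U V^T$ gives $\bigl(\widetilde{B^{-1}}\bigr)^{-1} - B = (\widetilde{A} - A) + U\bigl((I+D)^{-1} - I\bigr)V^T$, and bounding the second term via $\|(I+D)^{-1} - I\|_2 \le \|D\|_2/(1 - \|D\|_2) \le 2\|D\|_2$ together with the estimates above delivers $2\epsilon_1\|A\|_2^2 + 4\lambda(\beta+\lambda\epsilon_1)^2\epsilon_2$, as claimed.

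The main obstacle is the identity in the previous paragraph: one must recognize that the computed inverse, even with the \emph{perturbed} capacitance inverse $Z^{-1}$, remains exactly the inverse of $\widetilde{A} + U(I+D)^{-1}V^T$, a clean low-rank update. A more naive route --- estimating $\|(\widetilde{B^{-1}})^{-1} - (\widetilde{A} + U V^T)\|_2$ directly through $X^{-1} - Y^{-1} = -X^{-1}(X-Y)Y^{-1}$ --- introduces spurious factors of $\|B\|_2$ and $\|A^{-1}\|_2^2$ and therefore cannot reproduce the condition-number-free bound~\eqnref{backward_fullbound}; arranging for those factors to cancel is precisely what the Woodbury manipulation accomplishes.
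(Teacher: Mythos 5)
The paper does not prove this proposition at all --- it is imported verbatim from \cite[Thm.~6]{Ma25} as background in the appendix --- so there is no in-paper proof to compare against; your argument has to stand on its own, and it does. The decisive step, recognizing that the computed expression $\widetilde{A}^{-1}-\widetilde{A}^{-1}UZ^{-1}V^T\widetilde{A}^{-1}$ is the \emph{exact} inverse of $\widetilde{A}+U(I+D)^{-1}V^T$ with $D=Z-W$, is correct: one can confirm it without the push-through manipulation by applying SMW forward to $\widetilde{A}+UCV^T$ with $C=(I+D)^{-1}$, since then $C^{-1}+V^T\widetilde{A}^{-1}U=(I+Z-W)+(W-I)=Z$, which collapses the capacitance inverse to exactly $Z^{-1}$. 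The quantitative pieces all check out: $\|AE_1\|_2<\tfrac12$ gives $\|\widetilde A-A\|_2\le 2\epsilon_1\|A\|_2^2$; $\|W\|_2\le\beta+\lambda\epsilon_1$ and $\|WE_2\|_2<\tfrac12$ give $\|D\|_2\le 2(\beta+\lambda\epsilon_1)^2\epsilon_2<\tfrac12$; and $\|(I+D)^{-1}-I\|_2\le 2\|D\|_2$ then yields the second term $4\lambda\epsilon_2(\beta+\lambda\epsilon_1)^2$. That you recover the constants $2$ and $4$ and the factor $(\beta+\lambda\epsilon_1)^2$ exactly is strong evidence the derivation matches the intended one; note also that your bound $\|D\|_2<\tfrac12$ makes the stated invertibility hypothesis on $(Z^{-1})^{-1}-V^T\widetilde A^{-1}U=I+D$ automatic, so your version is if anything marginally cleaner than the statement requires. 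The only cosmetic caution is the notational quirk (inherited from the statement) that $Z^{-1}$ is the primitive computed object and $Z$ is defined as its inverse; you handle this consistently, but it is worth flagging explicitly so a reader does not misread $Z$ as the computed approximate inverse of $W$.
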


Note that $\epsilon_1$ and $\epsilon_2$ are absolute forward errors in computing the inverse of $A$ and the inverse of $I + V^T A^{-1} U$, respectively. In particular, $\epsilon_2$ assumes that the addition of matrices and matrix-matrix multiplications in $I + V^T \tilde{A}^{-1} U$ are performed in exact arithmetic such that $\epsilon_2$ solely reflects the forward error in the operation of inverting $I + V^T \tilde{A}^{-1} U$. Assuming further that the update $UV^T$ has a small norm relative to that of $A$, the paper \cite{Ma25} proves that the backward error~\eqnref{backward_fullbound} simplifies to the following bound.
\begin{proposition} \label{backward_corr_25}
\normalfont \cite[Cor. 7]{Ma25}  
Assume that $\epsilon_1$, $\epsilon_2$  and $\sigma_{\min}(A)$ are all upper bounded by one and also assume that $\epsilon_1$ and $\epsilon_2$ satisfy the assumptions of Proposition~\ref{backward_thm_25}. Additionally, assume that $\lambda \leq \frac{\sigma_{\min}(A)}{2}$. Then, the right-hand side of the bound of eqn.~(\ref{backward_fullbound}) simplifies to:
\begin{equation*}
\| B -\left(\widetilde{A}^{-1} - \widetilde{A}^{-1} U Z^{-1} V^T \widetilde{A}^{-1}\right)^{-1}\|_2 \leq 2 \epsilon_1 \|A\|_2^2 + 8\epsilon_2.
\end{equation*}   
\end{proposition}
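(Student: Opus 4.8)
The plan is to show that the first summand $2\epsilon_1\|A\|_2^2$ on the right-hand side of \eqnref{backward_fullbound} is carried over unchanged, while the second summand $4\lambda\epsilon_2(\beta+\lambda\epsilon_1)^2$ is dominated by $8\epsilon_2$ once the extra hypotheses $\epsilon_1,\epsilon_2,\sigma_{\min}(A)\le 1$ and $\lambda\le\tfrac12\sigma_{\min}(A)$ are imposed. Thus it suffices to establish the elementary inequality
\[
4\lambda\epsilon_2(\beta+\lambda\epsilon_1)^2 \le 8\epsilon_2,
\]
i.e.\ $\lambda(\beta+\lambda\epsilon_1)^2\le 2$, and then add $2\epsilon_1\|A\|_2^2$ to both sides and chain with \eqnref{backward_fullbound}.

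First I would pin down $\beta$. In Proposition~\ref{backward_thm_25}, $\beta$ is only required to be an upper bound for $\|I+V^TA^{-1}U\|_2$, so I would fix the canonical choice $\beta:=1+\lambda\,\sigma_{\min}(A)^{-1}$; this is admissible because, by the triangle inequality and submultiplicativity, $\|I+V^TA^{-1}U\|_2\le 1+\|V\|_2\|A^{-1}\|_2\|U\|_2 = 1+\lambda\,\sigma_{\min}(A)^{-1}$. The new hypothesis $\lambda\le\tfrac12\sigma_{\min}(A)$ then gives $\lambda\,\sigma_{\min}(A)^{-1}\le\tfrac12$, hence $\beta\le\tfrac32$.

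Next I would bound $\lambda$ and $\lambda\epsilon_1$: from $\lambda\le\tfrac12\sigma_{\min}(A)$ together with $\sigma_{\min}(A)\le 1$ we get $\lambda\le\tfrac12$, and then $\epsilon_1\le 1$ yields $\lambda\epsilon_1\le\tfrac12$. Consequently $\beta+\lambda\epsilon_1\le\tfrac32+\tfrac12=2$, so $(\beta+\lambda\epsilon_1)^2\le 4$, and therefore
\[
4\lambda\epsilon_2(\beta+\lambda\epsilon_1)^2 \le 4\cdot\tfrac12\cdot 4\cdot\epsilon_2 = 8\epsilon_2,
\]
which is exactly what is needed.

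There is essentially no hard step here: it is a short sequence of triangle-inequality and submultiplicativity estimates. The only point requiring a little care is the treatment of $\beta$ — one must exploit that $\beta$ enters Proposition~\ref{backward_thm_25} merely as a free upper bound, and specialize it to $1+\lambda\,\sigma_{\min}(A)^{-1}$ before the hypothesis $\lambda\le\tfrac12\sigma_{\min}(A)$ can take effect; relatedly, one should check that this choice of $\beta$, together with $\epsilon_1,\epsilon_2\le 1$, remains consistent with the three inequality hypotheses of Proposition~\ref{backward_thm_25} that the corollary inherits.
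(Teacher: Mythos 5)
Your argument is correct. The paper states this result as a citation of \cite[Cor.~7]{Ma25} and does not reproduce a proof, so there is nothing to compare line by line; but your derivation is the natural one and it checks out: taking the admissible value $\beta = 1+\lambda\,\sigma_{\min}(A)^{-1}\le 3/2$, together with $\lambda\le\tfrac12\sigma_{\min}(A)\le\tfrac12$ and $\lambda\epsilon_1\le\tfrac12$, gives $(\beta+\lambda\epsilon_1)^2\le 4$ and hence $4\lambda\epsilon_2(\beta+\lambda\epsilon_1)^2\le 8\epsilon_2$, while the first term is carried over unchanged. Your remark that $\beta$ enters the theorem only as a free upper bound, so one may specialize it before invoking the new hypotheses, is exactly the point that needs to be made explicit.
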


Next, let us recall the {\em ideal} forward error bound for matrix inversion. This follows the derivation in \cite[p. 261]{Higham02} where we replace componentwise perturbations with those in the 2-norm. Suppose $A$ is perturbed to $A + \Delta A$ with $\|\Delta A\|_2 \leq \eps \|A\|_2$. If $\tilde{A}^{-1}$, an approximate inverse of $A$, is the exact inverse of a perturbation of $A$, i.e., if it satisfies $\tilde{A}^{-1} = (A+ \Delta A)^{-1}$, then $(A+ \Delta A) \tilde{A}^{-1} = \tilde{A}^{-1} (A+ \Delta A) = I$ and so
 \[
\tilde{A}^{-1} = (A+ \Delta A)^{-1} = A^{-1} - A^{-1} \ \Delta A \ A^{-1} + \mathcal{O}(\eps^2)
 \]
 which gives the following standard forward error bound
\[
\|\tilde{A}^{-1} - A^{-1} \|_2 \leq  \| A^{-1} \ \Delta A\|_2 \ \|A^{-1} \|_2 + \mathcal{O}(\eps^2)
\leq \eps \kappa_2(A) \| A^{-1} \|_2 + \mathcal{O}(\eps^2).
\]
Arguing similarly for the inverse of $I + V^T \tilde{A}^{-1} U$ and replacing $\eps$ with machine epsilon $\eps_M$, we can replace $\epsilon_1, \epsilon_2$ in \eqnref{eqn:assumption1} and \eqnref{eqn:assumption2} by
\begin{align}
\epsilon_1 &\leq \eps_M \kappa_2(A) \| A^{-1} \|_2 + \mathcal{O}(\eps_M^2) \label{eqn:assumption1new} \\
\epsilon_2 & \leq \eps_M \kappa_2(Z) \|Z^{-1} \|_2 + \mathcal{O}(\eps_M^2)\label{eqn:assumption2new} 
\end{align}
which are typically less than one, but could be even larger if $A$ and $Z$ are ill-conditioned. Since 
\eqnref{eqn:assumption1new} and \eqnref{eqn:assumption2new} are ideal bounds, in view of the assumption \eqnref{backward_thm_assumption1}, we can see that in practice Propositions~\ref{backward_thm_25} and ~\ref{backward_corr_25} are applicable to matrices $A$ for which 
\[
\kappa_2(A) \leq (2 \eps_M)^{-1/2}.
\]
In addition, incorporating \eqnref{eqn:assumption1new} and \eqnref{eqn:assumption2new} into Proposition 
~\ref{backward_corr_25} yields the following backward error bound
\begin{equation*}
\| B -\left(\widetilde{A}^{-1} - \widetilde{A}^{-1} U Z^{-1} V^T \widetilde{A}^{-1}\right)^{-1}\|_2 \leq \eps_M 
\Big( 2 \kappa_2(A)^2\|A\|_2 + 8 \kappa_2(Z)^2 \|Z^{-1}\|_2 \Big) + \mathcal{O}(\eps_M^2).
\end{equation*}   
Applying the reverse triangle inequality to $B = A + UV^T$, the definition of $\lambda$ and the bound on it imposed in Proposition~\ref{backward_corr_25}, it is easy to cast the above absolute bound into the following relative backward error bound 
\begin{equation*}
\frac{\| B -\left(\widetilde{A}^{-1} - \widetilde{A}^{-1} U Z^{-1} V^T \widetilde{A}^{-1}\right)^{-1}\|_2}{\|B\|_2} \leq \eps_M \frac{4 \kappa_2(A)^3\|A\|_2 + 16 \kappa_2(Z)^2 \|Z^{-1}\|_2 \|A^{-1}\|_2}{2\kappa_2(A) - 1} + \mathcal{O}(\eps_M^2).
\end{equation*}   
where the bound is, once more, essentially proportional to the square of the condition number of $A$.
\end{document}